\newtheorem{theorem}{Theorem}[section]
\newtheorem{proposition}{Proposition}[section]
\newtheorem{lemma}{Lemma}[section]
\begin{document}

\setcounter{page}{1}

\title{ON THE IRREDUCIBLE REPRESENTATIONS OF \\
 SOLUBLE GROUPS OF FINITE RANK}

\author{A.V. Tushev}

\address{Department of Mathematics, Dnepropetrovsk National University,\\
Prospect Gagarina 72, Dnepropetrovsk, 49050, Ukraine\\
tushev@mamber.ams.org}

\maketitle

\begin{abstract}
We obtained some sufficient and necessary conditions of existence of faithful irreducible representations of a soluble group $G$  of finite rank over a field $k$ . It was shown that the existence of such representations strongly depends on construction of the socle of the group $G$ . The situation is especially complicated in the case where the field $k$  is locally finite.
\end{abstract}

\section{Introduction}
     We recall that a group $G$ has finite (Prufer) rank if there is an integer $r$ such that each finitely generated subgroup of $G$ can be generated by $r$   elements;  its  rank $r(G)$ is then the least integer $r$ with this property. A group $G$ is said to be polycyclic if it has a finite series in which each factor is cyclic.

     Let $G$ be a group, the subgroup $Soc(G)$ of $ G$ generated by all its minima normal subgroups is said to be the socle of the group $G$  ( if the group $G$ has no minimal normal subgroups then $Soc(G) = 1$). The subgroup $abSoc(G)$ of the group $G$ generated by all its minima normal abelian subgroups is said to be the abelian socle of the group $G$  ( if the group $G$ has no minimal abelian normal subgroups then $abSoc(G) = 1$) . If the group $G$ is soluble then $abSoc(G) = Soc(G)$.

     A nontrivial normal subgroup $E$ of a group $G$ is said to be essential if $E \cap N \ne 1$ for any nontrivial normal subgroup $N$ of $G$.

     If $B$ is an abelian group of finite rank then the spectrum $Sp(B)$ of the group $B$ is the set of prime numbers $p$ such that the group $B$ has an infinite $p$-section.

It was proved in \cite{7} that if a polycyclic group $G$ has a faithful irreducible representation over a locally finite field $k$ then the group $G$ is finite. However, infinite locally polycyclic groups of finite rank may have faithful irreducible representations over a locally finite field $k$. Moreover, in \cite{8}  we  found necessary and sufficient conditions of existence of faithful irreducible representations of locally polycyclic groups of finite rank over a locally finite field $ k$.

	In \cite{4} Gaschutz discovered a critical role of $Soc(G)$ in the theory of representations of finite groups. We denote by $\mathfrak S_{\mathfrak F} $ a class of all groups $G$ such that $Soc(G)$ is an essential normal subgroup of $G$ and all minimal normal subgroups of $G$ are finite. We should note that the class $\mathfrak S_{\mathfrak F} $ is rather large and contains locally normal groups and torsion groups of finite rank. In theorems 1 and 2 of \cite{9} we proved that a group $G \in \mathfrak S_{\mathfrak F}  $ has an irreducible faithful representation over a field $k$ if and only if $chark \notin \pi (abSoc(G))$ and one of the following equivalent conditions holds:

(i) $abSoc(G)$ has a subgroup $H$ such that $abSoc(G)/H$ is a locally cyclic group and $H$ contains no nontrivial $G$-invariant subgroups;

     (ii) $abSoc(G)$ is a locally cyclic $\mathbb{Z}G$-module, where $G$ acts on $abSoc(G)$ by conjugations.

     We should note that in the case of torsion soluble groups of finite rank such criterions were obtained in theorems 2 and 3 of \cite{8}. Thus, the structure of $abSoc(G)$ is the most important for existence of faithful irreducible representations of locally normal and, in particular, finite groups.

	 In the presented paper we are searching necessary and sufficient conditions of existence of faithful irreducible representations of soluble groups of finite rank over a field $k$.
     In theorem 5.1 is proved that if a soluble group of finite rank $G$ has a faithful irreducible representation over a field $k$ then $Soc(G)$ of the group $G$ is a locally cyclic $\mathbb{Z}G$-module, where the group $G$ acts on $Soc(G)$ by conjugations, and  $chark \notin \pi (Soc(G))$. Theorem 5.2 shows that if the field $k$ is not locally finite then the above condition on $Soc(G)$ is also sufficient for existence of an irreducible faithful representation of $G$ over $k$. By theorem 5.3, in the case where the field $k$ is locally finite the condition on $Soc(G)$ is also a criterion of existence of an irreducible faithful representation of $G$ over $k$ with an additional assumption that $Sp(B) \not\subset \{ chark\} $ for any nontrivial torsion-free abelian  normal subgroup $B$ of  $G$.

     However, the condition $Sp(B) \not\subset \{ chark\} $ for any nontrivial abelian torsion-free normal subgroup of $G$ is not necessary for existence of faithful irreducible representations of soluble groups of finite rank over a locally finite field. It follows from a result by Wherfritz \cite{10} in which a simple $kG$-module $W$ was constructed such that $C_G (W) = 1$ , where $k$ is a field of order $p$, $G = B\lambda \left\langle g \right\rangle $ is a torsion-free soluble group of rank 2 and $Sp(B) = \left\{ p \right\}$. So, in the case of locally finite fields the situation is much more complicated.

     \section{On Direct Sums of some Just Infinite Modules}

     An abelian group is said to be minimax if it has a finite series each of whose factor is either cyclic or quasi-cyclic. It is easy to note that for any abelian minimax group $A$ the set $Sp(A)$ is finite

     Let $R$ be a ring, an $R$-module $A$ is said to be just infinite (or $R$ -ji-module for shortness)  if $A$ is infinite and for any proper submodule $V$ of $A$ the quotient module $A/V$ is finite. We also omit $R$ in the notation if the structure of the ring $R$ is not important.

     We should emphasize that below  we always assume that any $R$-ji-module is torsion-free minimax as an abelian group.

     \begin{lemma}
           Let $A =  \oplus _{i = 1}^n A_i $, where $A_i $ are $ji$-modules. Then:\par
      (i) any nonzero submodule of $A$ contains a nonzero $ji$ -submodule;\par
(ii) the set of all  $ji$-submodules of  $A$ is countable.
\end{lemma}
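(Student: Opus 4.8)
The plan rests on two elementary features of a $ji$-module $M$ (recall that $M$ is assumed torsion-free minimax, hence of finite rank). First, since $M/V$ is finite for every nonzero submodule $V$, every nonzero submodule of $M$ has full rank and finite index; in particular any two nonzero submodules of $M$ meet nontrivially, and any nonzero submodule of $M$ is again a $ji$-module. I would record these observations at the outset, since both parts use them repeatedly.

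For (i) I would argue by induction on $n$, writing $A = B \oplus A_n$ with $B = \oplus_{i=1}^{n-1} A_i$; the case $n=1$ is exactly the remark that a nonzero submodule of a $ji$-module is a $ji$-module. Let $U \ne 0$ be a submodule of $A$. If $U \cap A_n \ne 0$, then $U \cap A_n$ is a nonzero submodule of the $ji$-module $A_n$, hence itself a $ji$-submodule contained in $U$, and we are done. Otherwise the projection $\pi : A \to B$ is injective on $U$, so it carries $U$ isomorphically onto a nonzero submodule $\pi(U) \le B$; by the inductive hypothesis $\pi(U)$ contains a nonzero $ji$-submodule $W'$, and its preimage $(\pi|_U)^{-1}(W') \le U$ is isomorphic to $W'$. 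As being a $ji$-module is invariant under module isomorphism, this furnishes the desired $ji$-submodule of $U$.

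For (ii) set $V = A \otimes_{\mathbb{Z}} \mathbb{Q}$, a finite-dimensional $\mathbb{Q}$-space carrying an $R$-action, and attach to each $ji$-submodule $W$ its rational span $Q_W = \mathbb{Q}W \le V$, an $R$-invariant subspace. The key point is that $Q_W$ is a \emph{simple} $(R \otimes \mathbb{Q})$-module: if $Q'$ were a proper nonzero $R$-invariant subspace of $Q_W$, then $W \cap Q'$ (which is nonzero, since $W$ spans $Q_W \supseteq Q'$) would be a nonzero submodule of $W$ of rank $\dim Q' < \dim Q_W = \operatorname{rk} W$, hence of infinite index, contradicting the $ji$-property. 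Now every simple submodule of $V$ is cyclic, so the map $v \mapsto \mathbb{Q}(Rv)$ from $V$ onto the family of simple submodules, together with the fact that $V$ is countable (being finite-dimensional over $\mathbb{Q}$), shows that only countably many subspaces arise as spans $Q_W$. It therefore suffices to bound, for a fixed simple subspace $Q$, the number of $ji$-submodules $W$ with $Q_W = Q$.

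Fixing such a $Q$, put $A_Q = A \cap Q$, a torsion-free group of rank $d = \dim Q$; every $ji$-submodule with span $Q$ is a full-rank subgroup of $A_Q$. Given two such submodules $W_1, W_2$, one checks $\mathbb{Q}(W_1 \cap W_2) = Q$, so $W_1 \cap W_2$ is nonzero of full rank, and the $ji$-property of each $W_j$ forces $W_j/(W_1 \cap W_2)$ to be finite; hence all $ji$-submodules with span $Q$ are mutually commensurable. Fixing one of them, $W_0$, each further $W$ satisfies $N W_0 \le W \le \tfrac1M W_0$ inside $Q$ for suitable positive integers $N, M$, and since $A_Q$ is torsion-free of finite rank the group $\tfrac1M W_0 / N W_0$ is finite; so for each pair $(N,M)$ only finitely many $W$ occur, and there are countably many pairs. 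Summing over the countably many admissible spans $Q$ yields countably many $ji$-submodules in all. The main obstacle is the simplicity of the span together with the countability it supplies; once these are established, the reduction to a single commensurability class and the sandwich argument are routine.
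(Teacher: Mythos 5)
Your proof is correct, and while part (i) is essentially the paper's argument, part (ii) takes a genuinely different route. For (i), the paper intersects the given submodule with the series of partial sums $A_1 \leqslant A_1\oplus A_2 \leqslant \dots \leqslant A$ and takes the first nonzero term, which embeds into some $A_j$ as a nonzero submodule; your induction with projections is the same computation organized differently. For (ii), the paper proceeds by induction on $n$: the base case counts submodules of a single $ji$-module $A$ by observing that each one contains some $Am$ with $A/Am$ finite; in the inductive step, a $ji$-submodule $B$ of $C\oplus D$ either lies inside $C$ or inside $D$ (since $B/(C\cap B)$ is simultaneously torsion-free and finite unless it vanishes), or meets both trivially and is then the graph of an isomorphism between $ji$-submodules $U\leqslant C$ and $V\leqslant D$; for a fixed isomorphism type $W$ these graphs inject into $\mathrm{Aut}(W)\leqslant GL_n(\mathbb{Q})$, which is countable. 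You instead argue globally: every $ji$-submodule spans a simple $\mathbb{Q}\otimes R$-subspace of the countable space $A\otimes_{\mathbb{Z}}\mathbb{Q}$, simple subspaces are cyclic and hence countably many, and $ji$-submodules with a common span are pairwise commensurable, so each span accounts for only countably many of them via the sandwich $NW_0\leqslant W\leqslant \tfrac{1}{M}W_0$. Notably, your argument for (ii) never uses the decomposition $A=\oplus_{i=1}^n A_i$ at all --- it applies to any $R$-module that is torsion-free of finite rank as an abelian group --- whereas the paper's induction exploits that decomposition and replaces your simplicity/commensurability analysis with the graph-of-isomorphism trick; both ultimately rest on the same two finiteness facts (countability of finite-rank objects over $\mathbb{Q}$, and finiteness of $B/mB$ for $B$ torsion-free of finite rank). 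Two cosmetic points only: your map $v\mapsto \mathbb{Q}(Rv)$ is not literally onto the family of simple submodules, but its image contains them all, which is what the count needs; and the finiteness of $\tfrac{1}{M}W_0/NW_0\cong W_0/MNW_0$ is a property of $W_0$ being torsion-free of finite rank, not of $A_Q$.
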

\begin{proof}
     (i) Let $X$ be a nonzero submodule of $A$. Evidently, the module $A$ has a finite series each of whose quotient is a $ji$-module. The intersection of this series with $X$ give us a finite series of submodules of $X$ each of whose quotient is either zero or a $ji$-module and the assertion follows.

     (ii) The proof is by induction on $n$. Let $n = 1$ then $A$ is a $ji$-module and for any proper submodule $B$ of  $A$ the quotient module $A/B$ is finite. Therefore there is an integer  $n$ such that $An \leqslant B$ and, as the set of integers $n$ is countable  and for each such $n$ there is only finite set of submodules  $B$ of  $A$ such that $An \leqslant B$, the set of all submodules of  $A$ is an union of  countable many of finite subsets. So, the set of all  $ji$-submodules of  $A$ is countable.

     Consider now the general case then $A = C \oplus D$, where $C = A_1 $ and $D =  \oplus _{i = 2}^n A_i $. By the induction hypothesis,  the set $\Re $ of all  $ji$-submodules of  $C$ and $D$ is countable.
     Let $B$ be a $j.i.$-submodule of  $A$, if $C \cap B \ne 0$ then the quotient group $B/(C \cap B)$ is torsion-free because so is the quotient group $A/C$ and, as $B$ is a $ji$-submodule, it implies that $B \leqslant C$. The same arguments show that if $D \cap B \ne 0$ then $B \leqslant D$. So, if $C \cap B \ne 0$ or $D \cap B \ne 0$ then $B \in \Re $.

     Let $\Im $ be the set of all $ji$-submodules of $A$ such that  $C \cap B = 0$ and $D \cap B = 0$. Since the set $\Re $ is countable, it is sufficient to show that so is $\Im $. Thus, we can assume that $B \in \Im $. Then the mappings  $\varphi :B \to \Pr _C B$ and $\phi :B \to \Pr _D B$ given by $\varphi :b \mapsto \Pr _C b$ and $\phi :b \to \Pr _D b$ are module isomorphisms. Since the set $\Re $ is countable, the set of submodules $U \oplus V$ of $A$, where $U \leqslant C$ and $V \leqslant D$ are isomorphic $ji$-submodules, is countable. Then it is easy to note that it is sufficient to show countability of the set $\aleph $ of all $ji$-submodules $B$ of $(W)_1  \oplus (W)_2 $ such that $(W)_1  \cap B = (W)_2  \cap B = 0$ and $W = \Pr _{(W)_1 } B = \Pr _{(W)_2 } B$, where  $W$  is a $ji$-module.
     Let $B \in \aleph $, as $(W)_1  \cap B = (W)_2  \cap B = 0$ and  $W = \Pr _{(W)_1 } B = \Pr _{(W)_2 } B$, for any $w \in (W)_1  = W$ there is a unique $v \in (W)_2  = W$ such that $w + v \in B$. It is easy to note that the mapping $\psi _B :W \to W$ given by $\psi _B :w \mapsto v$ is a group automorphism of $W$. So, we have a mapping $\eta :\aleph  \to AutW$ given by $B \mapsto \psi _B $. If $\psi _B  = \psi _K $ for some $K,B \in \aleph $ then for any $w \in (W)_1  = W$ and $v \in (W)_2  = W$ we have $w + v \in B$ if and only if $w + v \in K$ and it easily implies that $K = B$. So, $\eta $ is an injection. Since $W$ is a torsion-free abelian group of finite rank $n$, it is well known that $AutW \leqslant GL_n (\mathbb{Q})$. Then, as the set $GL_n (\mathbb{Q})$ is countable, so is $AutW$ and, as we have an injection  $\eta :\aleph  \to AutW$, the set $\aleph $ is countable.
\end{proof}
\begin{lemma}
     Let $A_i $ be quasicyclic $p$-groups, where $i = 1,2$, and $A =  \oplus _{i = 1}^2 A_i $. Then the cardinality of the set of all subgroups $B$ of $A$ which defines the quasicyclic quotint group $A/B$ is continuum.
\end{lemma}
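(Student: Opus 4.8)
The plan is to pin the cardinality down from both sides: show that $A$ has at most continuum many subgroups of any kind, and then exhibit continuum many subgroups $B$ for which $A/B$ is quasicyclic. Since $A = A_1 \oplus A_2$ with each $A_i \cong \mathbb{Z}(p^\infty)$ is a countable group, its power set has cardinality $2^{\aleph_0}$, so the set of all subgroups of $A$ — a fortiori the set of those $B$ with $A/B$ quasicyclic — has cardinality at most $2^{\aleph_0}$. The whole content therefore lies in the lower bound: producing continuum many distinct such $B$.

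For the lower bound I would realise the relevant subgroups as kernels of surjections onto a quasicyclic group. Fix an identification $A_1 \cong A_2 \cong P := \mathbb{Z}(p^\infty)$ and recall the standard fact that $\mathrm{End}(P) \cong \mathbb{Z}_p$, the ring of $p$-adic integers, acting faithfully on $P$; in particular $|\mathbb{Z}_p| = 2^{\aleph_0}$. For each $\beta \in \mathbb{Z}_p$ define a homomorphism $\phi_\beta : A \to P$ by $\phi_\beta(a_1, a_2) = a_1 + \beta a_2$, where $a_1, a_2$ are read inside $P$. Each $\phi_\beta$ is surjective, since already its restriction to the first summand is the identity on $P$; hence $A/\ker\phi_\beta \cong P$ is quasicyclic, and $B_\beta := \ker\phi_\beta = \{(-\beta a, a) : a \in P\}$ is a subgroup of the required kind.

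The key step is to check that distinct $\beta$ give distinct subgroups. If $B_\beta = B_{\beta'}$ then for every $a \in P$ the element $(-\beta a, a)$ must also lie in $B_{\beta'}$, which forces $-\beta a = -\beta' a$, i.e. $(\beta - \beta')a = 0$, for all $a \in P$. Because $\mathbb{Z}_p$ acts faithfully on $P$, this is only possible when $\beta = \beta'$. Thus $\beta \mapsto B_\beta$ is an injection of $\mathbb{Z}_p$ into the set in question, giving at least $2^{\aleph_0}$ subgroups $B$ with $A/B$ quasicyclic.

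Combining the two bounds yields that the cardinality is exactly $2^{\aleph_0}$, i.e. continuum. The only point needing care — and the natural place for the argument to slip if one is careless — is the injectivity: one must invoke the faithfulness of the $p$-adic action on $\mathbb{Z}(p^\infty)$ rather than naive cancellation, since multiplication by a non-unit of $\mathbb{Z}_p$ fails to be injective on $P$. If one prefers to avoid quoting $\mathrm{End}(\mathbb{Z}(p^\infty)) \cong \mathbb{Z}_p$, the same family $B_\beta$ can be written down explicitly as the graph subgroups $\langle\, c_n^{(1)} + t_n c_n^{(2)} : n \ge 0\,\rangle$, where $c_n^{(i)}$ are the standard generators of $A_i$ and $(t_n)$ runs over the coherent sequences $t_{n+1} \equiv t_n \pmod{p^{n}}$; the continuum of such sequences maps injectively to distinct subgroups with quasicyclic quotient.
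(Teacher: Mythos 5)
Your proof is correct and takes essentially the same route as the paper: both arguments produce continuum many ``graph'' subgroups of $A_1\oplus A_2$ parametrized by $p$-adic integers, using $|\mathbb{Z}_p|=2^{\aleph_0}$ and a faithfulness argument for injectivity. The only cosmetic differences are that the paper twists a fixed quasicyclic complement by automorphisms of $A_2$ (i.e.\ units of $J_p$) rather than taking kernels of the maps $(a_1,a_2)\mapsto a_1+\beta a_2$ over all $\beta\in\mathbb{Z}_p$, and that you make the (trivial) upper bound explicit.
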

\begin{proof}
      As for any quasicyclic subgroup $B$ of $A$ the quotient group $A/B$ is quasicyclic, it is sufficient to show that the cardinality of the set of all quasicyclic subgroups  of $A$ is continuum. Let $\Re $ be the set of all  quasicyclic $p$-subgroups $B$ of $A$ such that $B \cap A_1  = B \cap A_2  = 0$. Then it is sufficient to show that the cardinality of the set $\Re $ is continuum. Let  $B \in \Re $ then any element of  $B$ may be uniquely  written in the form $a = a_1  + a_2 $, where $a_1  = \Pr _{A_1 } (a)$ and $a_2  = \Pr _{A_2 } (a)$. If $\alpha  \in Aut(A_2 )$ then elements $a_1  + \alpha (a_2 )$ form another  subgroup $B_\alpha   \in \Re $. Let  $\alpha ,\beta  \in Aut(A_2 )$ if $B_\alpha   = B_\beta  $ then $\alpha (a_2 ) = \beta (a_2 )$ for all $a \in B$ and it implies that $\alpha  = \beta $. Thus, we have an injection  $\alpha  \mapsto B_\alpha  $ of $Aut(A_2 )$ into $\Re $. It is well known that $Aut(A_2 )$ is isomorphic to the group $U(J_p )$ of units of the ring $J_p $ of  $p$-adic integers . As the cardinality of  $U(J_p )$ is continuum, so is the cardinality of  $\Re $.
\end{proof}
\begin{proposition}
      Let $A =  \oplus _{i = 1}^n A_i $, where $A_i $ are $ji$-modules and let $p$ be a prime number such that $p \in Sp(A_i )$ for all $i = \overline {1,n} $.  Then the group $A$ has a subgroup $B$ such that $A/B$ is a quasicyclic $p$-group  and $B$ does not contain nonzero submodules.
\end{proposition}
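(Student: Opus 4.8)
The plan is to prove, by induction on $n$, a statement slightly stronger than what is asked: there is a group epimorphism $\theta : A \to C$ onto a quasicyclic $p$-group $C$ such that $\theta (U) = C$ for \emph{every} nonzero submodule $U$ of $A$. Granting this, the kernel $B = \ker \theta $ satisfies $A/B \cong C$, so $A/B$ is quasicyclic, and $B$ contains no nonzero submodule, since $U \leqslant B$ would force $\theta (U) = 0 \ne C$. Note that the hypothesis $p \in Sp(A_i)$ for all $i$ is exactly what guarantees that each $A_i$ (and hence $A$) admits an epimorphism onto a quasicyclic $p$-group; this is where the condition on the spectra is used.

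For the base case $n = 1$ the module $A = A_1$ is a $ji$-module, so by $p \in Sp(A_1)$ there is an epimorphism $\theta _1 : A_1 \to C$ onto a quasicyclic $p$-group. Every nonzero submodule $U$ of the $ji$-module $A_1$ has finite index, so $\theta _1 (U)$ has finite index in $C$; as $C$ is divisible it has no proper subgroup of finite index, whence $\theta _1 (U) = C$. This establishes the strengthened statement for $n = 1$.

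For the inductive step write $A = A_1 \oplus A'$ with $A' = \oplus _{i=2}^n A_i$, take $\theta _1 : A_1 \to C$ as above and let $\theta ' : A' \to C$ be supplied by the induction hypothesis, so $\theta '(U') = C$ for every nonzero submodule $U'$ of $A'$. Writing $\pi _1 , \pi '$ for the module projections of $A$ onto $A_1 , A'$, I consider the one-parameter family $f_\alpha = \theta _1 \pi _1 + \alpha \, \theta ' \pi ' : A \to C$ with $\alpha$ ranging over $Aut(C)$; each $f_\alpha$ is onto. By the computation in the proof of Lemma 2.2, $Aut(C) \cong U(J_p)$ has cardinality continuum, so there are continuum many candidates. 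By Lemma 2.1 (i) it suffices to choose $\alpha$ so that $f_\alpha (J) = C$ for every nonzero $ji$-submodule $J$ of $A$, and by Lemma 2.1 (ii) there are only countably many such $J$; so it is enough to show that each $J$ excludes only countably many values of $\alpha$.

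Fix a nonzero $ji$-submodule $J$ and set $u = \theta _1 \pi _1 |_J$ and $v = \theta ' \pi ' |_J$. Since $\pi _1 , \pi '$ are module maps, $\pi _1 (J)$ and $\pi '(J)$ are submodules of $A_1$ and $A'$, so by the strengthened properties of $\theta _1$ and $\theta '$ each of $\mathrm{im}\, u$ and $\mathrm{im}\, v$ is either $0$ or all of $C$, and at least one equals $C$ because $J \ne 0$. If exactly one of $u, v$ is onto, then $f_\alpha |_J = u + \alpha v$ is onto for every $\alpha$. The only delicate case is when both are onto: here I view $H = Hom(J, C)$ as a module over $End(C) = J_p$, observe that the homomorphisms of finite image form a submodule $H_0$, and note that $H/H_0$ is $J_p$-torsion-free, since $\mathrm{im}\, \varphi = C$ forces $\mathrm{im}(p\varphi ) = pC = C$. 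Thus $u + \alpha v$ fails to be onto precisely when $\alpha \bar v = -\bar u$ in $H/H_0$, and as $\bar v \ne 0$ in a torsion-free $J_p$-module this pins down $\alpha$ uniquely. Hence each $J$ rules out at most one $\alpha$, so only countably many $\alpha$ are excluded, and any surviving $\alpha$ gives the required $\theta = f_\alpha$. The main obstacle is the existence of \emph{diagonal} $ji$-submodules $J$ that meet no single summand and could be swallowed by a careless kernel; this is exactly what forces the strengthening of the induction hypothesis and makes the projection argument (a nonzero submodule maps \emph{onto} $C$, via finite index and divisibility) the crux of the proof.
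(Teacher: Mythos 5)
Your proof is correct, and it shares the paper's skeleton --- induction on $n$ and a continuum-versus-countable count based on Lemma 2.1 and on the cardinality of $U(J_p)$ from Lemma 2.2 --- but the key step is executed by a genuinely different mechanism. The paper stays in the language of subgroups: it forms $B_1 = C_1 \oplus D_1$ from the kernels given by induction, checks separately that $B_1$ contains no nonzero submodules, obtains from Lemma 2.2 continuum many subgroups $E \geqslant B_1$ with $A/E$ quasicyclic, and for each $ji$-submodule $K$ excludes the countably many such $E$ that contain $K + B_1$ (using that a quasicyclic group has only countably many subgroups). You work dually with epimorphisms $f_\alpha = \theta_1 \pi_1 + \alpha\,\theta' \pi'$ parametrized by $\alpha \in Aut(C) \cong U(J_p)$, and your exclusion is sharper: each $ji$-submodule $J$ rules out at most one $\alpha$, via the observation that $Hom(J,C)/H_0$ is a torsion-free $J_p$-module. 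Note that your ``slightly stronger'' inductive statement is in fact equivalent to the proposition's conclusion: if $B$ contains no nonzero submodules and $A/B$ is quasicyclic, then a nonzero submodule $U$ cannot have finite image in $A/B$, since that would place the nonzero submodule $p^k U$ inside $B$. So the strengthening adds no real content, but the epimorphism formulation is what makes your one-parameter family and the $J_p$-linear algebra available. What each approach buys: yours gives a cleaner, quantitatively sharper exclusion (one bad parameter per $J$ rather than countably many bad subgroups per $K$), and the final verification that the kernel avoids all nonzero submodules is immediate from surjectivity; the paper's version never leaves the subgroup lattice, uses Lemma 2.2 purely as a black box, and requires no module structure on $Hom(J,C)$.
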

\begin{proof}
     The proof is by induction on $n$. Let $n = 1$ then $A$ is a $ji$-module and, as $p \in Sp(A_i )$, the group $A$ has a subgroup $B$ such that $A/B$ is a quasicyclic $p$-subgroup. It follows from the definition of $ji$-modules that $B$ does not contain nonzero submodules.

     Consider now the general case then $A = C \oplus D$, where $C = A_1 $ and $D =  \oplus _{i = 2}^n A_i $. By the induction hypothesis, $C$ contains a subgroup $C_1 $ and $D$ contains a subgroup $D_1 $ which do  not contain nonzero submodules and such that $C/C_1 $ and $D/D_1 $ are quasicyclic $p$-groups. Put $B_1  = C_1  \oplus D_1 $.
     Show that the subgroup $B_1 $ does not contain nonzero submodules. Let $X$ be a submodule of $B_1 $. Then $\Pr _{C_1 } (X)$ is a submodule of  $C_1 $ an hence, as $C_1 $ does  not contain nonzero submodules, we see that $\Pr _{C_1 } (X) = 0$. The same arguments show that $\Pr _{D_1 } (X) = 0$ and hence $X = 0$. Thus, $B_1 $ does not contain nonzero submodules.

     Evidently, $A/B_1  = P_1  \oplus P_2 $, where $P_1  = C/C_1 $ and $P_2  = D/D_1 $ are quasicyclic groups. Let $\Re $ be the set of all subgroups $E$ of  $A$ such that  $B_1  \leqslant E$ and $A/E$ is a quasicyclic $p$ -group.  By lemma 2.2, the cardinality of the set $\Re $ is continuum. Let $\aleph $ be the set of all $ji$-modules of  $A$ and let $\Im  = \{ (K + B_1 )/B_1 |K \in \aleph \} $  then it follows from lemma 2.1(ii)  that the set $\Im $ is countable. If for some $K \in \aleph $ the quotient group $(K + B_1 )/B_1 $ is finite then $\left| {K_1 /(K \cap B_1 )} \right| = n$ for some integer $n$ and hence $Kn \leqslant B_1 $. But it is impossible because $B_1 $ does not contain nonzero submodules. Thus, the quotient group $(K + B_1 )/B_1 $ is infinite for any subgroup $K \in \aleph $ and hence for any subgroup $K \in \aleph $ either $A = K + B_1 $ or $A/(K + B_1 )$ is a quasicyclic $p$-group. Since the set of all subgroups of  a quasicyclic $p$-group is countable, it implies that for any $K \in \aleph $ the set $\Re _K $ of all subgroups $U \in \Re $ such that $K + B_1  \leqslant U$ is countable. Therefore, as the set $\aleph $ is countable, the set of all subgroups $U \in \Re $ which for some $K \in \aleph $ contain the subgroup $K + B_1 $ is countable. As the cardinality of the set $\Re $ is continuum, it implies that there is a subgroup $B \in \Re $ such that $A/B$ is a quasicyclic $p$-subgroup  and $B$ does not contain subgroups from the set $\aleph $. By lemma 2.1(i)  any nonzero submodule of  $A$ contains a submodule  from $\aleph $ and hence the subgroup $B$ does not contain nonzero submodules.
\end{proof}

	A subgroup $H$ of an abelian group $A$ is said to be dense if the quotient group $A/H$ is torsion.

\begin{proposition}
     Let $A =  \oplus _{i = 1}^n A_i $, where $A_i $ are $R$-$ji$-modules, and let $p$ be a prime number. Suppose that $Sp(A_i ) \not\subset \left\{ p \right\}$ for all $ i = \overline {1,n} $  then the group $A$ has a dense subgroup $H$ such that $A/H$ is locally cyclic $p'$-group  and $H$ does not contain nonzero submodules.
    \end{proposition}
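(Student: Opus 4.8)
The plan is to avoid a direct induction on $n$ (which would force one to amalgamate locally cyclic quotients whose prime sets may overlap) and instead to reduce the whole statement to Proposition 2.1 by regrouping the summands according to a single chosen prime. First, for each $i$ I would use the hypothesis $Sp(A_i)\not\subset\{p\}$ to pick a prime $q_i\in Sp(A_i)$ with $q_i\neq p$. Writing $Q=\{q_i : 1\le i\le n\}$ for the resulting finite set of primes and $I_q=\{i : q_i=q\}$, I would rewrite the decomposition as $A=\bigoplus_{q\in Q}A_{(q)}$, where $A_{(q)}=\bigoplus_{i\in I_q}A_i$. By construction $q\in Sp(A_i)$ for every $i\in I_q$, so each block $A_{(q)}$ satisfies the hypotheses of Proposition 2.1 with the prime $q$; that proposition then supplies a subgroup $B_{(q)}\le A_{(q)}$ such that $A_{(q)}/B_{(q)}$ is a quasicyclic $q$-group and $B_{(q)}$ contains no nonzero submodule.

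I would then set $H=\bigoplus_{q\in Q}B_{(q)}$ and verify the three required properties. For the quotient, $A/H\cong\bigoplus_{q\in Q}\bigl(A_{(q)}/B_{(q)}\bigr)$ is a direct sum of quasicyclic $q$-groups taken over pairwise distinct primes $q\neq p$; such a sum embeds in $\mathbb{Q}/\mathbb{Z}$ and is therefore a locally cyclic torsion $p'$-group. In particular $A/H$ is torsion, so $H$ is dense, and it is a $p'$-group, as required.

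Finally I would check that $H$ contains no nonzero submodule. If $X$ is a submodule of $H$, then for each $q$ the projection $\Pr_{A_{(q)}}(X)$ is a submodule of $A_{(q)}$ contained in $\Pr_{A_{(q)}}(H)=B_{(q)}$, hence a submodule of $B_{(q)}$, which forces $\Pr_{A_{(q)}}(X)=0$; since this holds for every $q\in Q$ we conclude $X=0$. This is the same projection argument already used for $B_1$ in Proposition 2.1. The only genuine point of the proof is the opening reduction: once the summands are regrouped by a common prime, the delicate combinatorial step of producing a single quasicyclic quotient while killing every submodule is precisely Proposition 2.1, and the distinctness of the primes across different blocks makes the resulting direct sum automatically locally cyclic. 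Thus no further amalgamation is needed, and in particular one does not have to repeat the continuum-versus-countable counting of Lemma 2.2.
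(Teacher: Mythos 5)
Your proof is correct, and its skeleton is the same as the paper's: partition the summands $A_i$ into blocks, each block tagged with a prime $\ne p$ lying in the spectrum of every summand it contains, with distinct tags across blocks; apply Proposition 2.1 to each block; and let $H$ be the direct sum of the resulting subgroups, so that $A/H$ is a sum of quasicyclic groups over pairwise distinct primes $\ne p$, hence a locally cyclic $p'$-group. You differ in two places. First, the partition: the paper builds it greedily ($B_1$ is the sum of \emph{all} $A_i$ with $p_1 \in Sp(A_i)$, then $B_2$ the sum of all remaining $A_i$ with $p_2 \in Sp(A_i)$, and so on), which yields the extra ``triangular'' property $p_i \notin Sp(B_j)$ for $i < j$; your grouping by a chosen prime $q_i$ per summand gives no such property, and none is needed. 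Second, and this is where that property matters to the paper, the verification that $H$ contains no nonzero submodules: the paper takes a nonzero $d$ in a putative submodule $D \leqslant H$ with a minimal number of nonzero components $d_{i_j} \in H_{i_j}$, shows that each cyclic module $d_{i_j}R$ is isomorphic to $dR$, and gets a contradiction because isomorphic modules have equal spectra while $p_{i_1} \in Sp(d_{i_1}R)$ and $p_{i_1} \notin Sp(d_{i_j}R)$ for $i_j > i_1$. Your projection argument replaces all of this: since each block $A_{(q)}$ is a submodule, the projection $\Pr_{A_{(q)}}$ is a module homomorphism, so a submodule of $H$ projects into $B_{(q)}$ as a submodule of $A_{(q)}$ and hence to zero. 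This is exactly the argument the paper itself uses for $B_1 = C_1 \oplus D_1$ inside the proof of Proposition 2.1; it is simpler than the paper's minimal-support argument and makes clear that the spectral bookkeeping of the partition is irrelevant to this step --- the distinctness of the primes across blocks is needed only to make $A/H$ locally cyclic. (One small caveat on your closing remark: the paper's proof of this proposition also does not repeat the counting of Lemma 2.2; that counting lives entirely inside Proposition 2.1, which both you and the paper invoke as a black box.)
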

\begin{proof}
    As $Sp(A_i ) \not\subset \left\{ p \right\}$ for all $i = \overline {1,n}
    $, there is a prime number $p_1  \ne p$ such that $p_1  \in Sp(A_i )$ for some  $i$. Let $B_1 $ be the direct sum of all $A_i $ such that $p_1  \in Sp(A_i )$. If $B_1  \ne A$ then, as $Sp(A_i ) \not\subset \left\{ p \right\}$ for all $i = \overline {1,n} $, there is a prime number $p_2  \ne p_1  \ne p$ such that $p_2  \in Sp(A_i )$ for some  $i$ and $A_i  \not\subset B_1 $. Let $B_2 $ be the direct sum of all $A_i $ such that $p_2  \in Sp(A_i )$ and $A_i  \not\subset B_1 $. Continuing this process we obtain a direct decomposition $A =  \oplus _{i = 1}^m B_i $ of the module $A$, where $B_i $ is a direct sum of $ji$-modules each of whose spectrum contains prime number $p_i  \ne p$ and $p_i  \notin Sp(B_j )$ if $i < j$. By proposition 2.1, each submodule $B_i $ contains a subgroup $H_i $ such that $B_i /H_i $ is a a quasicyclic $p_i $-group and $H_i $ does not contain nonzero submodules.

     Put $H =  \oplus _{i = 1}^m H_i $, Evidently, $H$ is a dense subgroup of $A$. As all the prime numbers $p_i  \ne p$ are different, it is easy to note that $A/H$ is locally cyclic $p'$-group. Show that $H$ does not contain nonzero submodules. Suppose that $H$ contains a nonzero submodule $D$. Any nonzero element $d \in D$ may be unequally written in the form $d = \sum\nolimits_{i = 1}^m {d_i } $, where $d_i  \in H_i $. Then we can chose a nonzero element $d \in D$ with the minimal number $k$ of nonzero summands $d_i  \in H_i $ in the representation $d = \sum\nolimits_{i = 1}^m {d_i } $ of the element $d$. So, $d = \sum\nolimits_{j = 1}^k {d_{i_j } } $, where $0 \ne d_{i_j }  \in H_{i_j } $, and it follows from the minimality of $k$ that for any $\alpha  \in R$ we have $d\alpha  = 0$ if and only if $d_{i_j } \alpha  = 0$ for each $d_{i_j } $. It easily implies that the mapping $\varphi _{i_j } :dR \to d_{i_j } R$ given by $\varphi _{i_j } :d\alpha  \mapsto d_{i_j } \alpha $ is a $R$-module isomorphism for each $d_{i_j }$. Therefore, all modules $d_{i_j } R$ are isomorphic but this is impossible, because by the construction of submodules $B_i $, $p_{i_1 }  \in Sp(d_{i_1 } R)$ and $p_{i_1 }  \notin Sp(d_{i_j } R)$ for any $i_j  > i_1 $ .
\end{proof}

     \section{Essential Normal Subgroups of Soluble Groups of Finite Rank}

     Let $G$ be an infinite group, we say that an infinite normal subgroup $A$ of the group $G$ is $G$-just-infinite (or $G$-ji-subgroup for shortness)  if $\left| {A:B} \right| < \infty $ for any proper $G$-invariant subgroup $B$ of  $A$.

     \begin{lemma}     If a soluble group $G$  of finite rank has a nontrivial torsion-free normal subgroup $N$ then  $N$ has a torsion-free minimax abelian $G$-just-infinite subgroup.
     \end{lemma}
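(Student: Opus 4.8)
The plan is to peel the problem down to an abelian module question and then win by a length argument, with essentially all of the real work concentrated in a single finiteness-of-primes step.

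First I would reduce to the abelian case. Since $N$ is a subgroup of the soluble group $G$ it is itself soluble, and being nontrivial its derived series has a last nontrivial term $A_0 = N^{(d)}$. This $A_0$ is abelian; it is torsion-free as a subgroup of the torsion-free group $N$; it has finite rank as a subgroup of $G$; and it is characteristic in $N$ and hence normal in $G$. Thus it suffices to locate the required subgroup inside $A_0$, which I regard as a $\mathbb{Z}G$-module under conjugation.

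Next, granting for the moment that $A_0$ contains a nonzero $G$-invariant minimax subgroup $A'$, I would extract the just-infinite one by a minimax-length argument. A torsion-free abelian minimax group carries a finite invariant $m(\cdot)$, its minimax length, which is additive on short exact sequences and satisfies $m(A'/B)\ge 1$ exactly when $A'/B$ is infinite. Choose a nonzero $G$-invariant subgroup $A\le A'$ with $m(A)$ least possible (the values lie in $\{1,\dots,m(A')\}$, so a minimum exists). If some nonzero proper $G$-invariant subgroup $B<A$ had $A/B$ infinite, then $m(B)=m(A)-m(A/B)<m(A)$, contradicting minimality; hence every nonzero proper $G$-invariant subgroup of $A$ has finite index. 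This is exactly the $G$-just-infinite condition (with ``proper'' read, as in the definition of $ji$-module above, to mean proper and nonzero), and $A$ is torsion-free, abelian and minimax as a subgroup of $A'$, so $A$ is the subgroup we want.

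It remains to produce a nonzero $G$-invariant minimax subgroup of $A_0$, and this is the step I expect to be the main obstacle. Put $C=C_G(A_0)$; then $\bar G=G/C$ acts faithfully on $A_0$ and embeds in $\mathrm{Aut}(A_0)\le GL_r(\mathbb{Q})$, a soluble linear group of finite rank. Replacing $A_0$ by $A_1=A_0\cap W$ for a simple $\mathbb{Q}\bar G$-submodule $W$ of $\mathbb{Q}\otimes A_0$, I may assume $\bar G$ acts rationally irreducibly. The crux is to bound the primes occurring in the action. By Mal'cev's theorem a subgroup of finite index in the irreducible soluble group $\bar G$ is triangularizable over a finite extension $K$ of $\mathbb{Q}$; the eigenvalues of its elements then generate a subgroup of $K^{*}$ of finite rank, and since $K^{*}$ modulo its roots of unity is free abelian, a finite-rank subgroup is finitely generated and so involves only finitely many primes of $K$, lying over a finite set $\pi$ of rational primes. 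Consequently this finite-index subgroup stabilizes a finitely generated module over the ring $\mathbb{Z}_\pi$ of rationals whose denominators are divisible only by primes in $\pi$, so the $G$-invariant subgroup generated by a single nonzero element of $A_1$ --- being a finite sum of translates of such a finitely generated $\mathbb{Z}_\pi$-module --- is torsion-free of finite rank with finite spectrum, i.e.\ minimax. The delicate point throughout is precisely this control of denominators: turning finiteness of the Pr\"ufer rank of the soluble linear group $\bar G$ into $\pi$-integrality of its action for a finite set of primes $\pi$.
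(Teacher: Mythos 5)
Your reduction to an abelian torsion-free normal subgroup and your extraction of the just-infinite subgroup by minimizing the minimax length are both correct; the latter is a clean, self-contained substitute for the paper's appeal to Zaitsev's theorem on chains with eventually finite indices. The genuine gap is exactly where you predicted it: the inference ``the triangularizing field is $K$, the eigenvalues are units outside a finite set $S$ of primes of $K$, \emph{consequently} the finite-index subgroup stabilizes a finitely generated $\mathbb{Z}_\pi$-module'' is false for merely triangularizable groups, because eigenvalues say nothing about off-diagonal entries. Concretely, the unipotent group $U=\left\{ \left( \begin{smallmatrix} 1 & a \\ 0 & 1 \end{smallmatrix} \right) : a\in\mathbb{Q} \right\}$ is soluble of rank $1$ and all its eigenvalues equal $1$ (so even $\pi=\emptyset$ would qualify), yet no finitely generated $\mathbb{Z}_\pi$-submodule of $\mathbb{Q}^2$ containing $e_1$ is $U$-invariant: indeed $e_1\mathbb{Z}U=\mathbb{Z}e_1\oplus\mathbb{Q}e_2$, which has infinite spectrum and is not minimax. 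Your sketch never uses rational irreducibility after invoking Mal'cev, and without it the crux step collapses.

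The missing idea that rescues the argument is complete reducibility: take the Mal'cev subgroup $T$ to be \emph{normal} of finite index; by Clifford's theorem the restriction of the rationally irreducible module to $T$ is semisimple, and a semisimple triangularizable group is diagonalizable over $K$. Only then do the eigenvalues control the whole action: writing $a=\sum_i c_i v_i$ in an eigenbasis, the orbit $a\mathbb{Z}T$ lies in $\bigoplus_i c_i O_{K,S}v_i$, a finitely generated $\mathbb{Z}_\pi$-module, and your finite-sum-of-translates step (which needs normality of $T$) then shows $a\mathbb{Z}G$ is minimax with spectrum in $\pi$. With that insertion your route works, and it is genuinely different from the paper's: the paper does not re-derive any linear group theory but quotes Charin's theorem to produce a nontrivial $G$-invariant subgroup on which the action factors through a finitely generated abelian-by-finite group, and then Hall's Lemma 5.1 of \cite{5} to conclude that cyclic submodules are minimax. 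In effect you are re-proving the Charin--Hall input, and the single ingredient you omitted (Clifford's theorem, i.e.\ diagonalizability rather than triangularizability) is precisely what makes the control of denominators possible.
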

     \begin{proof}   If the group $G$ has a nontrivial torsion-free normal subgroup then it has a nontrivial abelian torsion-free normal subgroup $A$. We can consider $A$ as a $\mathbb{Z}G$-module, where $G$ acts on $A$ by conjugations, and there is no harm in assuming that $C_G (A) = 1$. Let $B = A \otimes _\mathbb{Z} \mathbb{Q}$ be the divisible hull of $A$. It follows from the results of \cite{2} that the group $G$ has a finite series of normal subgroups $1 \leqslant N \leqslant H \leqslant G$, where the quotient group $H/N$ is finitely generated abelian, the quotient group $G/H$ is finite, and the group $B$ has a nontrivial G-invariant subgroup $C$ centralized by $N$. Then replacing $A$ by $A \cap C$ we can assume that $G/C_{_G } (A)$ is finitely generated abelian-by-finite. It follows from lemma 5.1 of \cite{5} that $a\mathbb{Z}G$ is a minimax group for any $0 \ne a \in A$ and replacing $A$ by $a\mathbb{Z}G$ we can assume that $A$ is a minimax group. Since, by \cite{11}, for any descending chain $\left\{ {A_i } \right\}$ of subgroups of $A$ there is an integer $n$ such that $\left| {A_i :A_{i + 1} } \right| < \infty $ for all $i \geqslant n$, it is easy to show that  there is a nontrivial $G$-just-infinite subgroup $D \leqslant A$.
     \end{proof}

     \begin{lemma} Let $G$ be a soluble group of finite rank which has a nontrivial torsion-free normal subgroup. Then the group $G$ has a torsion-free minimax abelian normal subgroup $jiSoc(G) \ne 1$ such that $jiSoc(G)$ is a direct product of finitely many of torsion-free abelian minimax $G$-just-infinite subgroups and $jiSoc(G) \cap N \ne 1$ for any nontrivial torsion-free normal subgroup $N$ of the group $G$.
     \end{lemma}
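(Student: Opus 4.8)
The plan is to obtain $jiSoc(G)$ by a maximality argument driven by the finite rank of $G$. First I would observe that any internal direct product $A_1 \times \cdots \times A_m$ of torsion-free abelian minimax $G$-just-infinite subgroups has a bounded number of factors: each $A_i$ is infinite and torsion-free, hence has rank at least $1$, so $m \leqslant r(G)$. By the hypothesis together with Lemma 3.1, the collection of such direct products is nonempty, since a single $G$-just-infinite subgroup of the given nontrivial torsion-free normal subgroup provides one with $m = 1$. Hence I may choose, among all such products, one with the maximal number of factors and set $jiSoc(G) = A_1 \times \cdots \times A_m$.

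Next I would dispatch the structural assertions, which are routine. Each $A_i$ is normal in $G$, so the product $jiSoc(G)$ is normal in $G$; a finite direct product of torsion-free abelian minimax groups is again torsion-free abelian minimax, so $jiSoc(G)$ is of the required type; and it is nontrivial because $m \geqslant 1$. Thus the only substantive content is the intersection property.

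For the essential property I would argue by contradiction. Suppose there is a nontrivial torsion-free normal subgroup $N$ of $G$ with $jiSoc(G) \cap N = 1$. Applying Lemma 3.1 to $N$, which is itself a nontrivial torsion-free normal subgroup of the soluble group $G$ of finite rank, I obtain a torsion-free abelian minimax $G$-just-infinite subgroup $D \leqslant N$, normal in $G$. Then $D \cap jiSoc(G) \leqslant N \cap jiSoc(G) = 1$. Since $D$ and $jiSoc(G)$ are normal subgroups meeting trivially, they commute elementwise, and their product is the internal direct product $A_1 \times \cdots \times A_m \times D$, a direct product of $m+1$ torsion-free abelian minimax $G$-just-infinite subgroups. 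This contradicts the maximality of $m$, so in fact $jiSoc(G) \cap N \neq 1$ for every nontrivial torsion-free normal subgroup $N$ of $G$.

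The main point requiring care is ensuring that adjoining $D$ genuinely enlarges the direct product. One must check both that $D$ meets $jiSoc(G)$ trivially, which follows at once from $D \leqslant N$, and that the enlarged product remains abelian and direct, which follows from normality and trivial intersection via the standard computation $[jiSoc(G), D] \leqslant jiSoc(G) \cap D = 1$. Everything else reduces to the closure of the class of torsion-free abelian minimax groups under finite direct products and to the observation that rank bounds the number of factors, so I expect no serious obstacle beyond this bookkeeping.
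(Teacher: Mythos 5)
Your proof is correct and takes essentially the same approach as the paper: both rely on Lemma 3.1 to produce $G$-just-infinite subgroups inside torsion-free normal subgroups and on the finite rank of $G$ to bound the number of direct factors. The only difference is cosmetic -- the paper builds the product iteratively and notes the process terminates by finite rank, whereas you select a product with a maximal number of factors and derive a contradiction, which is arguably a cleaner way to phrase the identical argument.
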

     \begin{proof}
	We construct the subgroup $jiSoc(G) = S$ by induction. By lemma 2.1, the group $G$ has a torsion-free minimax abelian $G$-just-infinite subgroup $A$ and we put $S_1  = A$. If there is a nontrivial torsion-free normal subgroup $N$ of the group $G$ such that $S_1  \cap N = 1$ then it follows from lemma 3.1 that there is a torsion-free minimax abelian $G$-just-infinite subgroup $1 \ne A_1 \leqslant N$ such that $S_1  \cap A_1 = 1$ and we put $S_2  = S_1  \times A_1$. Continuing this process we should note that it is terminated because the group $G$ has finite rank and hence the subgroup $jiSoc(G)$ does exist.
\end{proof}

 If the group $G$ has no nontrivial  torsion-free normal subgroup then we put $jiSoc(G) = 1$
            , it follows from the above lemma that $jiSoc(G) \ne 1$ if and only if the group $G$ has a nontrivial torsion-free normal subgroup. Certainly, the subgroup $jiSoc(G)$ is not defined uniquely.

      \begin{proposition}
     Let $G$ be a soluble group of finite rank and let $p$ be a prime number . Suppose that $Sp(B) \not\subset \{ p\} $ for any nontrivial abelian torsion-free normal subgroup $B$ of  $G$. If $jiSoc(G) \ne 1$ then $jiSoc(G)$ has a dense subgroup $ H$ such that $jiSoc(G)/H$ is a locally cyclic torsion $p'$-group and $H$ does not contains nontrivial $G$-invariant subgroups.
    \end{proposition}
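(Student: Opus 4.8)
The plan is to deduce the statement directly from Proposition 2.2 by recognising $jiSoc(G)$ as a finite direct sum of $\mathbb{Z}G$-ji-modules. Put $S = jiSoc(G)$. By Lemma 3.2 we may write $S = \bigoplus_{i=1}^n A_i$, where each $A_i$ is a torsion-free abelian minimax $G$-just-infinite subgroup of $G$. Regard $S$ as a $\mathbb{Z}G$-module, with $G$ acting by conjugation, so that the $G$-invariant subgroups of $S$ (and of each $A_i$) are precisely the $\mathbb{Z}G$-submodules. Then the defining property of a $G$-just-infinite subgroup, namely that $\left| A_i : B \right| < \infty$ for every proper $G$-invariant subgroup $B$ of $A_i$, says exactly that each $A_i$ is an infinite $\mathbb{Z}G$-module all of whose proper submodules have finite index, i.e.\ a $\mathbb{Z}G$-ji-module in the sense of Section~2. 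Moreover the blanket assumption of Section~2 that every ji-module be torsion-free minimax as an abelian group is fulfilled here, since Lemma 3.2 supplies exactly torsion-free abelian minimax summands.

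Next I would verify the spectral hypothesis needed to invoke Proposition 2.2 with $R = \mathbb{Z}G$. Each summand $A_i$ is a nontrivial abelian torsion-free normal subgroup of $G$, so by the standing hypothesis of the present proposition we have $Sp(A_i) \not\subset \{ p \}$ for every $i = \overline{1,n}$. Hence $S = \bigoplus_{i=1}^n A_i$ is a direct sum of $R$-ji-modules satisfying all the conditions of Proposition 2.2. Applying that proposition yields a dense subgroup $H \leqslant S$ such that $S/H$ is a locally cyclic $p'$-group and $H$ contains no nonzero submodule.

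Finally I would translate the module-theoretic conclusion back into group-theoretic language. Since $H$ is dense, $S/H$ is torsion, and therefore $S/H = jiSoc(G)/H$ is a locally cyclic torsion $p'$-group, as required. Because the $\mathbb{Z}G$-submodules of $S$ are exactly its nontrivial $G$-invariant subgroups, the fact that $H$ contains no nonzero submodule is precisely the assertion that $H$ contains no nontrivial $G$-invariant subgroup. This gives the statement. The argument is in essence a dictionary between $G$-invariant subgroups and $\mathbb{Z}G$-submodules together with the observation that each just-infinite summand inherits the spectral condition from the global hypothesis; I do not anticipate a genuine obstacle, as the substantive work is already carried out in Proposition 2.2 and in Lemma 3.2, which guarantees the finite direct decomposition into torsion-free minimax $G$-ji-subgroups.
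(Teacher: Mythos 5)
Your proof is correct and follows essentially the same route as the paper: decompose $jiSoc(G)$ via Lemma 3.2 into torsion-free abelian minimax $G$-just-infinite subgroups, identify these with $\mathbb{Z}G$-ji-modules, check the spectral hypothesis on each summand, and apply Proposition 2.2. Your verification of the spectral condition is in fact cleaner than the paper's (you note each $A_i$ is itself a nontrivial torsion-free abelian normal subgroup of $G$, so the hypothesis applies directly, whereas the paper cites Lemma 2.1 for this step), but the substance is identical.
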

    \begin{proof}
     As $Sp(B) \not\subset \{ p\} $ for any nontrivial abelian torsion-free normal subgroup $B$ of  $G$, it follows from lemma 2.1 that $Sp(B) \not\subset \{ p\} $ for any abelian torsion-free $G$-just-infinite subgroup $B$ of  $G$. We can consider $jiSoc(G)$ as a $\mathbb{Z}G$-module, where the group $G$ acts on $jiSoc(G)$ by conjugations. Then $jiSoc(G) =  \oplus _{i = 1}^n A_i $, where $A_i $ are $\mathbb{Z}G$-$j.i.$-modules. Therefore, $jiSoc(G)$  and $p$ meet the conditions of proposition 2.2 and the assertion follows.
     \end{proof}

     \begin{proposition} Let $G$ be a soluble group of finite. Then $jiSoc(G) \times Soc(G)$ is an essential subgroup of $G$ and for any normal subgroup $1 \ne N \leqslant G$ either $jiSoc(G) \cap N \ne 1$ or $Soc(G) \cap N \ne 1$.
         \end{proposition}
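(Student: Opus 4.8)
The plan is to prove the displayed dichotomy first, because the essentiality of $jiSoc(G)\cdot Soc(G)$ is then immediate: the product of two normal subgroups is a normal subgroup containing both factors, so if for every nontrivial normal subgroup $N$ at least one of $jiSoc(G)\cap N$, $Soc(G)\cap N$ is nontrivial, then $(jiSoc(G)\cdot Soc(G))\cap N\neq 1$, which is exactly essentiality. Before that I would justify the direct-product notation, for which the main structural observation is that $Soc(G)$ is abelian: in a soluble group every minimal normal subgroup is abelian, and two distinct minimal normal subgroups intersect trivially and hence commute, so the subgroup they generate is abelian. Viewing $Soc(G)$ as a $\mathbb{Z}G$-module, it is then the sum of its simple submodules (its minimal normal subgroups), hence semisimple, and consequently every $\mathbb{Z}G$-submodule of $Soc(G)$ is again semisimple.

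For the directness I would set $L=jiSoc(G)\cap Soc(G)$. This is a $G$-invariant subgroup of the abelian group $Soc(G)$, i.e. a $\mathbb{Z}G$-submodule, and it is torsion-free since $jiSoc(G)$ is. If $L\neq 1$ then, being a torsion-free semisimple module, $L$ has a simple submodule $M$, which is torsion-free. For each prime $p$ the subgroup of $p$-th powers of $M$ is $G$-invariant and nonzero, so by minimality it equals $M$; thus $M$ is divisible and torsion-free, i.e. a nonzero $\mathbb{Q}$-vector space, which is not minimax. But $M\leqslant jiSoc(G)$, and $jiSoc(G)$ is minimax by Lemma 3.2 while every subgroup of a minimax group is minimax — a contradiction. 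Hence $L=1$ and $jiSoc(G)\cdot Soc(G)$ is indeed direct.

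To prove the dichotomy, let $1\neq N\leqslant G$ be normal. Since $N$ is soluble, the last nontrivial term $A$ of its derived series is abelian; it is characteristic in $N$ and therefore normal in $G$, with $A\leqslant N$. If $A$ is torsion-free, it is a nontrivial torsion-free normal subgroup of $G$, so Lemma 3.2 gives $jiSoc(G)\cap A\neq 1$ and hence $jiSoc(G)\cap N\neq 1$. Otherwise the torsion subgroup $T$ of $A$ is nontrivial; it is characteristic in $A$, hence normal in $G$. Choosing a prime $p$ for which the $p$-primary component $T_p$ is nonzero, I note that $T_p$ is characteristic in $T$, hence normal in $G$, and that $T_p$ is an abelian $p$-group of finite rank, so its elements of order $p$ form a finite subgroup; therefore $T_p$ is Chernikov and satisfies the minimal condition on subgroups. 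Consequently the nontrivial $G$-invariant subgroups of $T_p$ have a minimal element $M$, which is a minimal normal subgroup of $G$ contained in $N$, giving $Soc(G)\cap N\supseteq M\neq 1$.

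The delicate point, and the one I expect to be the main obstacle, is the periodic case: one cannot argue with the whole torsion subgroup $T$, since a torsion abelian group of finite rank need not satisfy the minimal condition (for example $\oplus_p\mathbb{Z}/p\mathbb{Z}$ has rank one but no minimal condition), so it need not contain a minimal $G$-invariant subgroup in an obvious way. The essential move is to descend to a single primary component $T_p$, where finite rank genuinely forces the Chernikov property and hence the minimal condition, thereby producing an honest minimal normal subgroup of $G$ inside $N$. The torsion-free case is by comparison immediate, being exactly the defining intersection property of $jiSoc(G)$ supplied by Lemma 3.2.
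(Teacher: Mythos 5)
Your proof is correct and follows essentially the same route as the paper: reduce to an abelian normal subgroup of $G$ inside $N$, apply Lemma 3.2 in the torsion-free case, and use finite rank to produce a minimal normal subgroup of $G$ inside the torsion part (the paper simply takes the finite $G$-invariant layer of elements of order $p$, while you pass through the Chernikov property of $T_p$ — the same idea). You additionally verify that $jiSoc(G) \cap Soc(G) = 1$, justifying the direct-product notation, a point the paper's proof leaves implicit.
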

         \begin{proof}
    The subgroup $N \leqslant G$ contains a nontrivial abelian $G$-invariant subgroup and hence we can assume that the subgroup $N$ is abelian. If the torsion subgroup $T$ of $N$ is trivial than it follows from lemma 3.2 that  $jiSoc(G) \cap N \ne 1$. If $T \ne 1$ then , as $r(G) < \infty $, $T$ contains a nontrivial finite $G$-invariant subgroup and hence $Soc(G) \cap N \ne 1$.
    \end{proof}

    \section{Essential Subgroups and Representations of Groups}
    \begin{proposition}
         Let  $k$ be a field and $H =  \times _{i = 1}^n H_i $ be a group. Let $\varphi _i $ be an irreducible representation of the group $H_i $ over the field $k$, where $i = \overline {1,n} $. Then there is an irreducible representation $\varphi $ of the group $H$ over the field $k$ such that $Ker\varphi  \cap H_i  = Ker\varphi _i $, where $i = \overline {1,n} $.
         \end{proposition}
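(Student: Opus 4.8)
The plan is to argue by induction on $n$, reducing the whole statement to the case $n=2$. For $n=1$ we simply take $\varphi=\varphi_1$. For the inductive step I would set $H'=\times_{i=1}^{n-1}H_i$ and use the induction hypothesis to produce an irreducible representation $\varphi'$ of $H'$ with $Ker\,\varphi'\cap H_i=Ker\,\varphi_i$ for $i\le n-1$; then applying the two–factor case to $H=H'\times H_n$ with the pair $\varphi',\varphi_n$ gives an irreducible $\varphi$ of $H$ with $Ker\,\varphi\cap H'=Ker\,\varphi'$ and $Ker\,\varphi\cap H_n=Ker\,\varphi_n$. Since each $H_i$ with $i<n$ lies in $H'$, intersecting yields $Ker\,\varphi\cap H_i=(Ker\,\varphi\cap H')\cap H_i=Ker\,\varphi'\cap H_i=Ker\,\varphi_i$, so it is indeed enough to handle $n=2$.

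For $n=2$ let $V_1,V_2$ be the simple $kH_1$– and $kH_2$–modules affording $\varphi_1$ and $\varphi_2$. The natural candidate is the outer tensor product $\Phi=\varphi_1\otimes\varphi_2$ on $V_1\otimes_k V_2$, where $(h_1,h_2)$ acts as $\varphi_1(h_1)\otimes\varphi_2(h_2)$; this is a representation because the two factors of the direct product act on different tensor legs and hence commute. The point I expect to be the main obstacle is that $\Phi$ need \emph{not} be irreducible when $k$ is not algebraically closed: the tensor product of two simple modules over a general field may well be reducible. So the tensor product by itself does not finish the proof, and one must pass to an irreducible piece of it.

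The key observation that saves the argument is that the required kernel condition is inherited by \emph{every} nonzero subquotient, so irreducibility is all that needs to be arranged separately. Indeed, choosing a $k$-basis of $V_2$ exhibits the restriction $\mathrm{Res}_{H_1}(V_1\otimes_k V_2)$ as a (possibly infinite) direct sum of copies of $V_1$, i.e. as a $V_1$-isotypic semisimple $kH_1$-module; the symmetric statement holds for $H_2$. Since submodules and quotients of an isotypic semisimple module are again isotypic semisimple of the same type, the restriction to $H_1$ of any nonzero $kH$-subquotient $S$ of $V_1\otimes_k V_2$ is a nonzero direct sum of copies of $V_1$, and an element of $H_1$ annihilates $S$ if and only if it annihilates $V_1$; hence the kernel of $H_1$ on $S$ is exactly $Ker\,\varphi_1$, and likewise that of $H_2$ is $Ker\,\varphi_2$. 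Thus the representation afforded by any simple subquotient $S$ automatically satisfies $Ker\,\varphi\cap H_i=Ker\,\varphi_i$.

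It remains only to exhibit one simple subquotient, which is routine: I would pick any nonzero $x\in V_1\otimes_k V_2$, form the cyclic (hence finitely generated) submodule $M=x\,kH$, invoke Zorn's lemma to obtain a maximal submodule $N<M$, and set $S=M/N$. Then $S$ is a simple $kH$-module, the representation $\varphi$ it affords is irreducible, and by the isotypic argument above it satisfies $Ker\,\varphi\cap H_i=Ker\,\varphi_i$ for $i=1,2$. Note that this construction avoids any appeal to finite dimensionality of the $V_i$, which matters here since the groups $H_i$ may be infinite; in particular I deliberately take a simple \emph{quotient} of a finitely generated submodule rather than a simple submodule, the latter possibly failing to exist. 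The crux is the two–factor kernel computation via the isotypic decomposition; the reduction to $n=2$ and the production of $S$ are straightforward.
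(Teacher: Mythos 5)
Your proof is correct, and it rests on the same basic construction as the paper's — the outer tensor product of the given simple modules with componentwise action of $H$, followed by passage to a simple subquotient — but the decisive verification is done by a genuinely different argument. The paper shows that $M = M_1 \otimes_k \cdots \otimes_k M_n$ is cyclic, generated by $a = a_1 \otimes \cdots \otimes a_n$ (where $a_i$ generates $M_i$), takes a maximal submodule $L$ of $M$ and sets $W = M/L$; the kernel equality then comes from the observation that $a\,kH_i \cong M_i$ is simple and meets $L$ trivially (as $a \notin L$), so a copy of $M_i$ survives inside $W$, giving $Ker\varphi \cap H_i \leqslant Ker\varphi_i$, while the reverse inclusion holds because $H_i$ acts trivially on the other tensor factors. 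You instead observe that the restriction of the tensor product to $H_i$ is $V_i$-isotypic semisimple, so \emph{every} nonzero $kH$-subquotient has $H_i$-kernel exactly $Ker\varphi_i$; irreducibility is then arranged independently by taking a cyclic submodule modulo a maximal submodule. Your route buys robustness: the kernel condition is automatic for any simple subquotient whatsoever, you never need to prove the tensor product is cyclic, and you avoid the paper's argument that the embedded copies of $M_i$ miss the chosen maximal submodule. The paper's route is more elementary and explicit (generators and annihilators, no appeal to the theory of isotypic semisimple modules) and treats all $n$ at once; your inductive reduction to $n=2$ is an inessential detour, since your isotypic argument works verbatim for $n$ factors.
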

         \begin{proof}
    Let $M_i $ be a $kH_i $-module of the representation $\varphi _i $ then, as the module $M_i $ is irreducible, there is a generator $a_i $ of the module $M_i $, where $i = \overline {1,n} $. We can consider $M_i $ as a $kH$-module, where $H_j $  acts on $M_i $  trivially for any $j \ne i$.

    Put $M = M_1  \otimes _F M_2  \otimes _F ... \otimes _F M_n $, where $H$ acts on $M$ as the following : $(m_1  \otimes m_2  \otimes ... \otimes m_n )g = m_1 g \otimes m_2 g \otimes ... \otimes m_n g$ ( see \cite{6}, Chap. XVIII, \S 2 ). Then it is not difficult to show that the $kH$-module $M$ is generated by $a = a_1  \otimes a_2  \otimes ... \otimes a_n $ and $Ann_{FH_i } (a) = Ann_{FH_i } (a_i )$ for any $i = \overline {1,n} $. So, $akH_i  \simeq M_i $ and we can assume that $M_i  = akH_i $, where $i = \overline {1,n} $.

    Let $L$ be a maximal submodule of $M$. Put $W = M/L$ and let $\varphi $ be an irreducible representation of the group $H$ induced by action of $H$ on the $kH$-module $W$. Since the $kH_i $-module $M_i  = akH_i $ is irreducible , $M_i  \cap L = 0$ and hence we can assume that $M_i  \leqslant W$. Therefore, $Ker\varphi _i  = C_{H_i } (M_i ) \geqslant C_{H_i } (W) = Ker\varphi  \cap H_i $. On the other hand, it follows from the definition of action of $H_i $ on $M$ that $Ker\varphi _i  = C_{H_i } (M_i ) \leqslant C_{H_i } (M) \leqslant C_{H_i } (W) = Ker\varphi  \cap H_i $ and we can conclude that $(Ker\varphi  \cap H_i ) = Ker\varphi _i $, where $i = \overline {1,n} $.
    \end{proof}

   \begin{proposition}Let $G$ be a group with a normal  subgroup  $H =  \times _{i = 1}^n H_i $, where all $H_i $ are normal subgroups of $G$, such that $N \cap H_i  \ne 1$   for any normal subgroup $1 \ne N \leqslant G$ and for some subgroup $H_i $. Let  $k$ be a field and suppose that each subgroup $H_i $ has an irreducible representation $\varphi _i $ over the field $k$  such that $Ker\varphi _i $ does not contain nontrivial $G$-invariant subgroups. Then the group $G$ has an irreducible faithful representation over the field $k$.
    \end{proposition}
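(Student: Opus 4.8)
The plan is to assemble the given representations $\varphi_i$ into a single irreducible representation of $H$, induce it up to $G$, pass to an irreducible quotient, and then use the essentiality hypothesis together with the kernel condition to force that quotient to be faithful.

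First I would apply Proposition 4.1 to the direct product $H = \times_{i=1}^n H_i$ and the representations $\varphi_i$, obtaining an irreducible $kH$-module $W$ affording a representation $\varphi$ with $Ker\varphi \cap H_i = Ker\varphi_i$ for every $i$. Since $W$ is irreducible it is cyclic over $kH$; fix a generator $a$. Next I would induce $W$ to $G$, forming the $kG$-module $U = W \otimes_{kH} kG$, which is cyclic over $kG$, generated by $u = a \otimes 1$. By Zorn's lemma $U$ then possesses a maximal submodule $L$ (a proper submodule cannot contain the generator $u$, and the union of a chain of submodules not containing $u$ again omits $u$, hence is proper). Put $V = U/L$ and let $\varphi_G$ be the corresponding irreducible representation of $G$.

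The crucial observation is that $W \otimes 1$ is a $kH$-submodule of $U|_H$ isomorphic to $W$, using the identity $(w \otimes 1)h = w \otimes h = wh \otimes 1 \in W \otimes 1$ for $h \in H$. Its image $\bar W$ in $V$ is a $kH$-submodule, and since $u \notin L$ the image of $u$ is a nonzero element of $\bar W$, so $\bar W \neq 0$; being a nonzero quotient of the simple module $W \otimes 1 \cong W$, it satisfies $\bar W \cong W$. Thus $V|_H$ contains a copy of $W$, and consequently $C_{H_i}(V) \leqslant C_{H_i}(W) = Ker\varphi \cap H_i = Ker\varphi_i$ for each $i$.

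It remains to verify faithfulness, which is the step where all the hypotheses come together. Put $N = C_G(V) = Ker\varphi_G$, a normal subgroup of $G$, and suppose $N \neq 1$. By the essentiality hypothesis there is an index $i$ with $N \cap H_i \neq 1$. Now $N \cap H_i$ is a nontrivial $G$-invariant subgroup, and since $N$ acts trivially on $V$ we have $N \cap H_i \leqslant C_{H_i}(V) \leqslant Ker\varphi_i$, contradicting the assumption that $Ker\varphi_i$ contains no nontrivial $G$-invariant subgroup. Hence $N = 1$ and $V$ affords the desired faithful irreducible representation. The \emph{main obstacle} is the middle step: guaranteeing that the induced module $U$ admits an irreducible quotient whose restriction to $H$ still sees $W$. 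The cyclicity of $U$ over $kG$, inherited from the cyclicity of the simple module $W$, is what makes Zorn's lemma applicable, while the fact that the generator lies in $W \otimes 1$ is what prevents $W$ from being annihilated in passing to $V$; once these are secured, the faithfulness argument is a direct application of the essentiality condition and Proposition 4.1.
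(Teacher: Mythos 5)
Your proposal is correct and takes essentially the same route as the paper: apply Proposition 4.1, extend the resulting simple $kH$-module to a simple $kG$-module that still contains it, and then run the essentiality/kernel argument. Your induced module $W \otimes_{kH} kG$ modulo a maximal submodule is exactly the paper's $kG/I$ with $I$ a maximal right ideal of $kG$ containing $J$ (where $M \cong kH/J$), just phrased module-theoretically instead of via ideals, and the concluding faithfulness argument coincides with the paper's.
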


    \begin{proof}
     By proposition 4.1, there is an irreducible representation $\varphi $ of the subgroup $H$ over the field $k$ such that $Ker\varphi  \cap H_i  = Ker\varphi _i $, where $i = \overline {1,n} $. Let $M$ be a module of the representation $\varphi $ then $M \cong kH/J$, where $J$ is a maximal ideal of $kH$. Let $I$ be a maximal ideal of $kG$ such that $J \leqslant I$ then $I \cap kH = J$ and $W = kG/I$ is an irreducible $kG$-module. Let $\phi $ be a representation of the group $G$ over the field $k$ induced by action of $G$ on $W$. As $I \cap kH = J$, we can assume that $M \leqslant W$ and hence $Ker\phi  \cap H \leqslant Ker\varphi $. Then, as $Ker\varphi  \cap H_i  = Ker\varphi _i $, we can conclude that $Ker\phi  \cap H_i  \leqslant Ker\varphi _i $ for any $i = \overline {1,n} $. If $Ker\phi  \ne 1$ then $Ker\phi  \cap H_i  \ne 1$ for some $i$ but it is impossible because $Ker\phi  \cap H_i  \leqslant Ker\varphi _i $ and  $Ker\varphi _i $ does not contain nontrivial $G$-invariant subgroups.
    \end{proof}

     \begin{lemma} Let $G$ be a group and let $k$ be a  field. Let $M$ be a simple $kG$-module such that  $C_G (M) = 1$. Then :\par
     (i) $M$ contains a simple  $kH$-module for any subgroup $H \leqslant G$ of finite index in $G$;\par
      (ii) $C_M (N) = 0$ for any nontrivial normal subgroup $N$ of $G$;\par
    (iii) ) if $A$ is a central subgroup of $G$ then $Ann_{kA} (M)$ is a prime ideal of $kA$;\par
(iv) if $A$ is an elementary abelian normal $p$-subgroup of $G$ such that $\left| {G:C_G (A)} \right| < \infty $ then $p \ne chark$ and $A$ contains a subgroup $H$ such that $A/H$ is a cyclic group and $H$ contains no nontrivial  $G$-invariant subgroups .
\end{lemma}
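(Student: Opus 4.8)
The plan is to establish the four parts in order, using each as a tool for the next; the crux is part (i), from which (iv) is then assembled. For (i), I would first reduce to a normal subgroup by replacing $H$ with its core $N=\bigcap_{g\in G}H^g$, which is still of finite index and contained in $H$. Since $M$ is simple it is cyclic, $M=mkG$, and writing $kG=\bigoplus_{i=1}^{n}g_i kN$ with $n=|G:N|$ shows $M|_N=\sum_i (mg_i)kN$ is finitely generated over $kN$, hence has a maximal submodule $U$ with simple quotient $M/U$. Because $N$ is normal, each right translate $Ug$ is again a maximal $kN$-submodule with simple quotient $(M/U)^{g}$, and the finite intersection $\bigcap_{gN\in G/N}Ug$ is $G$-invariant, hence a proper $kG$-submodule, hence $0$. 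This embeds $M|_N$ into a finite direct sum of simple $kN$-modules, so $M|_N$ is semisimple of finite length and contains a simple $kN$-submodule $S$. Finally, since $N\trianglelefteq H$, the module $SkH=\sum_j Sh_j$ (sum over a transversal of $N$ in $H$) is a finite sum of simple $kN$-modules, so it has finite $kN$-length and hence finite $kH$-length, and therefore contains a simple $kH$-submodule. Faithfulness is not needed here.

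Parts (ii) and (iii) are short. For (ii), the fixed-point space $C_M(N)$ is a $kG$-submodule, since $(mg)n=m(gng^{-1})g=mg$ for $m\in C_M(N)$, $g\in G$, using $N\trianglelefteq G$; by simplicity it is $0$ or $M$, and the latter would force $N\le C_G(M)=1$, contradicting $N\ne 1$. For (iii), as $A$ is central every element of $kA$ acts as a $kG$-endomorphism of $M$, giving a ring homomorphism $kA\to \mathrm{End}_{kG}(M)$; by Schur's lemma the target is a division ring, so the kernel $Ann_{kA}(M)$ is the kernel of a homomorphism from a commutative ring to a domain, hence a prime ideal.

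For (iv), I apply (i) to $C=C_G(A)$ (of finite index) to obtain a simple $kC$-submodule $S\le M$; since $A\le Z(C)$, part (iii) gives that $P=Ann_{kA}(S)$ is a prime ideal of $kA$. First I rule out $p=\mathrm{char}\,k$: in that case each $a-1$ is nilpotent in $kA$, so the augmentation ideal $\omega$ is the unique prime, forcing $P=\omega$ and hence $S\le C_M(A)$; but $A$ is a nontrivial normal subgroup, so $C_M(A)=0$ by (ii), a contradiction. Thus $p\ne\mathrm{char}\,k$. Now set $H=\{a\in A : S(a-1)=0\}$, the kernel of the action of $A$ on $S$. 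The action embeds $A/H$ into $\mathrm{End}_{kC}(S)^\times$, and since $A$ is central the image of $kA$ generates a commutative subfield $F$, so $A/H$ maps injectively into the $p$-torsion of $F^\times$, which is cyclic because $p\ne\mathrm{char}\,k$; hence $A/H$ is cyclic. Finally, if $K\le H$ were a nontrivial $G$-invariant subgroup, then $K$ would act trivially on $S$, giving $0\ne S\le C_M(K)=0$ by (ii), which is impossible; so $H$ contains no nontrivial $G$-invariant subgroup.

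The main obstacle is part (i): everything downstream rests on producing a simple submodule after restriction to a finite-index subgroup, and the delicate step is the passage from ``finitely generated over $kN$'' to ``finite length,'' achieved above by intersecting the conjugates of a maximal submodule to embed $M|_N$ into a finite semisimple module. Once (i) is secured, the remaining parts are essentially formal, the only genuine choices in (iv) being the correct $H$ (the kernel of the $A$-action on a single simple $kC$-constituent) and the twofold use of (ii), to exclude $\mathrm{char}\,k=p$ and to kill $G$-invariant subgroups of $H$.
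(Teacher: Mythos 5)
Your proof is correct and follows essentially the same route as the paper's: the core-plus-maximal-submodule argument with the finite family of translates and the Remak embedding for (i), the identical fixed-point computation for (ii), and the same construction in (iv) (a simple $kC$-submodule for $C=C_G(A)$, the augmentation ideal as unique prime of $kA$ to exclude $p=\mathrm{char}\,k$, the choice $H=C_A(S)$, cyclicity of $A/H$ via the multiplicative group of a field, and part (ii) to kill $G$-invariant subgroups). The only cosmetic deviation is in (iii), where you invoke Schur's lemma and primeness of the kernel of a map to a domain, while the paper argues directly that $Ma=0$ or $Ma=M$ for each $a\in kA$; the content is the same.
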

\begin{proof}
     (i) Let $K = \bigcap\nolimits_{g \in G} {H^g } $  then $K$ is a normal subgroup of finite index in $G$ such that $K \leqslant H$. Let $V$ be a maximal $kK$-submodule of $M$. As $\left| {G:K} \right| < \infty $ , it is easy to note that the set $\left\{ {Vg|g \in G} \right\}$ of $kK$-submodules of $M$ is finite. Since $ \cap _{g \in G} Vg \ne M$ is a  $kG$-submodule of $M$, we see that $ \cap _{g \in G} Vg = 0$. Then, by Remak theorem, $M \leqslant  \oplus _{g \in G} M/Vg$. As the set $\left\{ {Vg|g \in G} \right\}$ is finite, so is the set $\left\{ {M/Vg|g \in G} \right\}$ of simple $kK$-modules $M/Vg$ and hence, as $M \leqslant  \oplus _{g \in G} M/Vg$ , we can conclude that $M$ is an arthenian $kK$-module. Therefore, as $K \leqslant H$, $M$ is an arthenian $kH$-module and hence $M$  contains a simple  $kH$-module.

	(ii) Suppose that $C_M (N) = W \ne 0$. Since $wgh = w(ghg^{ - 1} )g = wg$ for any $w \in W$, $h \in N$ and $g \in G$, we can conclude that $W$ is a nonzero $kG$-submodule of the simple module $M$. Therefore, $C_M (N) = W = M$
and hence $1 \ne N \leqslant C_G (M)$ but this is impossible because $C_G (M) = 1$.\par

	(iii) As $A$ is a central subgroup of $G$, it is not difficult to show that $Ma$ is a submodule of $M$ for any element $a \in kA$. Then, as the module $M
$ is simple, it is easy to note that either $Ma = 0$ or $Ma = M$ for any element $a \in kA$. It implies that if $Mab = 0$ for some $a,b \in kA$ then either $a \in Ann_{kA} (M)$ or $b \in Ann_{kA} (M)$. Thus $Ann_{kA} (M) $ is a prime ideal of $kA$.

(iv) Put $C = C_G (A)$ then it follows from (i) that $M$ contains a simple $kC$-module $W$. It follows from (iii) that $Ann_{kA} (W)$ is a prime ideal of $kA$. If $p = char k$ then $kA$ has an unique prime fundamental ideal $I = \left\langle {1 - g|g \in A} \right\rangle $. But then $0 \ne W \leqslant C_M (A)$ that contradicts (ii). Thus, $p \ne chark$.

	Since $Ann_{kA} (W)$ is a prime ideal of $kA$, the quotient group $A/C_A (W)$ is a subgroup of the multiplicative group of the field of fractions of the domain $kA/Ann_{kA} (W)$. Then, as  $A$ is an elementary abelian $p$-group, it follows from theorem 127.3 of \cite{3} that the quotient group $A/C_A (W)$ is cyclic. If $C_A (W)$  contains a $G$-invariant nontrivial subgroup $K$ then $0 \ne W \leqslant C_M (K)$ that contradicts (ii). Thus, we can put $H = C_A (W)$.
\end{proof}
   \begin{proposition}
        Let $G$ be a group such that all minimal abelian normal subgroups of $G$ are finite and let $k$ be a  field. If the group $G$ has a faithful irreducible representation $\varphi $   over the field $k$ then $chark \notin \pi (abSoc(G))$ and one of the following equivalent conditions holds:\par
    (i) the abelian socle $abSoc(G)$ contains a subgroup $H$ such that the quotient group $abSoc(G)/H$ is locally cyclic and $H$ contains no nontrivial $G$-invariant subgroups;\par
    (ii) the abelian socle $abSoc(G)$ is a locally cyclic $\mathbb{Z}G$-module, where the group $G$ acts on $abSoc(G)$ by conjugations.
    \end{proposition}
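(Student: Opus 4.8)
The plan is to translate the hypothesis into module language and then analyse the action of the abelian socle on the module one prime at a time. A faithful irreducible representation $\varphi$ furnishes a simple $kG$-module $M$ with $C_G(M)=1$. Since every minimal abelian normal subgroup of $G$ is finite and characteristically simple, it is an elementary abelian $p$-group for some prime $p$, and $abSoc(G)$, being abelian torsion, is the restricted direct product $\bigoplus_p S_p$ of its primary components, where each $S_p$ is an elementary abelian $p$-group (the product of all minimal normal $p$-subgroups) and is characteristic in $G$. I would first dispose of the characteristic condition, then establish (i) component by component and reassemble, and finally invoke the equivalence of (i) and (ii) proved in \cite{9}.

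For $chark\notin\pi(abSoc(G))$: given a prime $p\in\pi(abSoc(G))$ there is a minimal abelian normal subgroup $A$ that is a nontrivial finite elementary abelian $p$-group. As $A$ is finite, $G/C_G(A)$ embeds into the finite group $Aut(A)$, so $\left|G:C_G(A)\right|<\infty$ and Lemma 4.1(iv) applies to $A$, yielding $p\ne chark$. Hence no prime in $\pi(abSoc(G))$ equals $chark$.

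For condition (i) I would produce, for each prime $p$ with $S_p\ne 1$, a subgroup $H_p\le S_p$ of index $p$ containing no nontrivial $G$-invariant subgroup. When $S_p$ is finite this is exactly the output of Lemma 4.1(iv) applied to $S_p$ (again $\left|G:C_G(S_p)\right|<\infty$), giving $H_p=C_{S_p}(W)$ with $S_p/H_p$ cyclic. To cover the general case uniformly I would argue with weights over an algebraic closure $\bar k$. Because $p\ne chark$ and $S_p$ has exponent $p$, every element of $S_p$ acts semisimply on $M\otimes_k\bar k$, so the commuting family $S_p$ is simultaneously diagonalisable and $M\otimes_k\bar k=\bigoplus_{\chi}(M\otimes_k\bar k)_\chi$ splits into weight spaces indexed by characters $\chi\colon S_p\to\mu_p$. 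The set $\Omega$ of occurring weights is permuted by $G$ and by $\mathrm{Gal}(\bar k/k)$; the trivial character does not lie in $\Omega$ (otherwise $C_M(S_p)\ne 0$, against Lemma 4.1(ii)); and a $G$- and Galois-stable union of weight spaces descends to a $kG$-submodule of $M$, so simplicity of $M$ forces $\Omega$ to be a single $G$-Galois orbit. Since $C_G(M)=1$ gives $\bigcap_{\chi\in\Omega}\ker\chi=C_{S_p}(M)=1$, and Galois conjugation leaves the kernel of a character unchanged, choosing any $\chi_0\in\Omega$ produces $H_p:=\ker\chi_0$ with $G$-core $\bigcap_{g\in G}\ker(\chi_0^{\,g})=\bigcap_{\chi\in\Omega}\ker\chi=1$ and $S_p/H_p\cong\mathbb{Z}/p$. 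Setting $H=\bigoplus_p H_p$ inside $abSoc(G)=\bigoplus_p S_p$, the quotient $abSoc(G)/H\cong\bigoplus_p S_p/H_p$ is a subgroup of $\mathbb{Q}/\mathbb{Z}$, hence locally cyclic, while any $G$-invariant $K\le H$ splits along the primary components as $\bigoplus_p(K\cap S_p)$ with each $K\cap S_p\le H_p$ a $G$-invariant subgroup, forcing $K=1$. This gives (i), and (ii) follows from the equivalence established in \cite{9}.

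The main obstacle is the possibly infinite $p$-socle $S_p$: Lemma 4.1(iv) as stated needs $\left|G:C_G(S_p)\right|<\infty$, which fails once $S_p$ has infinite rank, so I cannot simply read off a core-free hyperplane from a simple $kC_G(S_p)$-submodule. The weight-space argument is what lets a single character $\chi_0$ already have core-free kernel, because Clifford transitivity up to Galois makes the occurring weights one orbit with trivial common kernel. The technical points to secure are therefore the simultaneous diagonalisation of the infinite commuting semisimple family $S_p$ on $M\otimes_k\bar k$ and the Galois descent identifying $G$- and Galois-stable sums of weight spaces with $kG$-submodules of $M$; both are standard but must be handled with care when the extension $\bar k/k$ is infinite.
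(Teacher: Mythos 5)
Your treatment of the characteristic condition and your final reassembly of $H=\bigoplus_p H_p$ across the primes agree with the paper, and you correctly isolate the real obstacle: when a Sylow component $S_p$ of $abSoc(G)$ has infinite rank, $\left|G:C_G(S_p)\right|$ may be infinite, so Lemma 4.1(iv) cannot be applied to $S_p$ itself. But your workaround has a genuine gap at its central step. The decomposition $M\otimes_k\bar k=\bigoplus_\chi (M\otimes_k\bar k)_\chi$ into weight spaces for $S_p$ is not a ``standard'' fact that merely needs care --- it is false in general for an infinite commuting family acting on an infinite-dimensional module, even though every single element acts semisimply. Concretely, let $S$ be an infinite elementary abelian $p$-group, $char\,\bar k\ne p$, and consider the regular module $\bar kS$: a weight vector would be a nonzero element whose finite support is mapped onto itself by translation by every $s\in S$, which is impossible, so $\bar kS$ has no weight vectors at all, although each element of $S$ is diagonalizable on it. Simultaneous diagonalization is a theorem about finite commuting families (equivalently about finite subgroups $F\le S_p$, via semisimplicity of $kF$); passing to the infinite union is exactly where it breaks, since the joint eigenspaces are intersections of decreasing chains that can vanish. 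What your argument actually requires is that $M$ restricted to $kS_p$ be semisimple, equivalently (since $\mathrm{soc}_{kS_p}(M)$ is $G$-invariant and $M$ is $kG$-simple) that $M$ contain at least one simple $kS_p$-submodule; Lemma 4.1(i) guarantees simple submodules only for subgroups of finite index in $G$, so this is precisely the hard point, and the proposal offers no argument for it. Everything downstream --- the orbit structure of $\Omega$, the choice of $\chi_0$, the core computation --- rests on this unproved decomposition. (The descent along the possibly infinite extension $\bar k/k$ is a second, lesser issue.)

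The paper closes the same gap by a purely socle-side device that never diagonalizes $S_p$ on $M$: it regards $abSoc(G)$ as a semisimple $\mathbb{Z}G$-module all of whose simple submodules are finite, and splits each Sylow component $A_p$ into isotype components $A_i$. Isomorphic simple $\mathbb{Z}G$-modules have equal centralizers, so each isotype component satisfies $\left|G:C_G(A_i)\right|<\infty$, and Lemma 4.1(iv) \emph{does} apply to each $A_i$, producing $H_i\le A_i$ of index $p$ containing no nontrivial $G$-invariant subgroups. Writing $A_i=H_i\oplus\langle a_i\rangle$ and fixing an index $i_1$, the anti-diagonal subgroup $H_p=\left\langle \{a_i-a_{i_1}\}\cup\{H_i\}\right\rangle$ has index $p$ in $A_p$ and meets each $A_i$ exactly in $H_i$; since every simple submodule of the semisimple module $A_p$ lies in a single isotype component, $H_p$ contains no nontrivial submodule. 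Thus all representation-theoretic input is confined to finite-index-centralizer subgroups, where simple submodules of $M$ are guaranteed to exist. If you wish to salvage the weight-space picture, you would first have to prove $\mathrm{soc}_{kS_p}(M)\ne 0$, and the only visible route to that is the isotype-component decomposition the paper uses.
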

\begin{proof}
     	Let $M$ be a simple $kG$-module of the representation $\varphi $ then $C_G (M) = 1$. Put $chark = q$ if $chark \in \pi (abSoc(G))$ then the group $G$ has a finite nontrivial normal abelian $q$-subgroup $A$. As $\left| A \right| < \infty $, we can conclude that $\left| {G:C_G (A)} \right| < \infty $ but it contradicts lemma 4.1(iv). Thus,  $chark \notin \pi (abSoc(G))$.\par
     (i) We can consider $A = abSoc(G)$ as a semi-simple $\mathbb{Z}G$-module all of whose simple submodules are finite, where the group $G$ acts on $abSoc(G)$ by conjugations. Let $B = \mathop  \oplus \limits_{i \in I} B_i $ be an isotype component of $A$ (see \cite{1}, Chap. VIII, §3), where $B_i $ are isomorphic simple $\mathbb{Z}G$-modules, then $B$ is an elementary abelian $p$-group. Since the modules $B_i $ are isomorphic and $B = \mathop  \oplus \limits_{i \in I} B_i $, it is easy to show that $C_G (B_i ) = C_G (B)$ for all $i \in I$ and, as all submodules $B_i $ are finite, we can conclude that $|G:C_G (B)| < \infty $.

    Let $A_p  = \mathop  \oplus \limits_{i \in I} A_i $ be a Sylow $p$-component of $A$, where $A_i $ are isotype components of $A_p $. As it was shown above, $|G:C_G (A_i )| < \infty $ and hence, by lemma 4.1(iv), each component $A_i $ has a subgroup $H_i $ which contains no nontrivial $G$-invariant subgroups and such that $|A_i /H_i | = p$. Then, as $A_i $ is an elementary abelian $p$-group, there is an element $a_i  \in A_i $, such that $A_i  = H_i  \oplus \left\langle {a_i } \right\rangle $. We fix an index $i_1  \in I$ and put $H_p  = \left\langle {\{ a_i  - a_{i_1} |i_1  \ne i\}  \cup \{ H_i |i \in I\} } \right\rangle $. Then it is not difficult to note that $|A_p /H_p | = p$ and $H_p  \cap A_i  = H_i $.

    Suppose that $H_p $ contains a nontrivial $\mathbb{Z}G$-submodule then $H_p $ contains a simple submodule $T$. By the definition of isotype components, $T \leqslant A_i $ for some $i \in I$ and it implies that $T \leqslant H_p  \cap A_i $. However, $H_p  \cap A_i  = H_i $ and hence $T \leqslant H_i $ but this is impossible because $H_i $ contains no nonzero $\mathbb{Z}G$-submodules. Thus, each Sylow $p$-component  $A_p$   of $A$ has a subgroup $H_p $ which contains no nonzero $\mathbb{Z}G$-submodules and such that the quotient group $A_p /H_p $ is cyclic. Put $H = \mathop  \oplus \limits_{p \in \pi (A)} H_p $ then it is not difficult to show that the quotient group $A/H$ is locally cyclic and $H$ contains no nontrivial $G$-invariant subgroups.

	(ii) It follows from   lemma 7 of \cite{8} that  conditions (i) and (ii) are equivalent.
\end{proof}

	\section{Some Necessary and Sufficient Conditions of Existence of Faithful Irreducible Representations of Soluble Groups of Finite Rank}

     \begin{lemma} Let $A$ be a torsion-free abelian group of finite rank and let $k$ be a field.

(i) if the field $k$ is not locally finite then the group $A$ has a faithful irreducible representations over the field $k$;

(ii) if the field $k$ is  locally finite of characteristic $p$ and the group $A$ has a dense subgroup $H$ such that $A/H$ is a locally cyclic $p'$-group then the group $A$ has an irreducible representations $\varphi $   over the field $k$  such that  $Ker\varphi  = H$.
 \end{lemma}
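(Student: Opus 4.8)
The plan is to use that $A$ is abelian, so $kA$ is commutative and a simple $kA$-module is simply a field quotient $L=kA/\mathfrak m$. Equivalently, an irreducible representation of $A$ over $k$ is given by a group homomorphism $\lambda\colon A\to\overline{k}^{\times}$ into the multiplicative group of an algebraic closure $\overline k$ of $k$: such a $\lambda$ extends to a $k$-algebra map $\widetilde\lambda\colon kA\to\overline k$ whose image $k[\lambda(A)]$ is generated over $k$ by elements algebraic over $k$. Writing $k[\lambda(A)]$ as the directed union of the finite extensions $k(\lambda(a_1),\dots,\lambda(a_m))$ taken over all finite subsets $\{a_1,\dots,a_m\}\subseteq A$ shows that $L:=k[\lambda(A)]$ is a field, hence a simple $kA$-module. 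The representation $\varphi$ afforded by $L$ then satisfies $Ker\varphi=\{a\in A:\lambda(a)=1\}=Ker\lambda$. Thus both statements reduce to producing a homomorphism $\lambda\colon A\to\overline k^{\times}$ with a prescribed kernel.

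For (i) I would first record that, since $A$ is torsion-free of finite rank $n$, it embeds into $\mathbb Q^{n}$, so it suffices to embed $\mathbb Q^{n}$ into $\overline k^{\times}$. As $k$ is not locally finite, either $char\,k=0$, in which case $\overline k\supseteq\overline{\mathbb Q}$, or $char\,k=p$ and $k$ contains an element $t$ transcendental over $\mathbb F_p$, in which case $\overline k\supseteq\overline{\mathbb F_p(t)}$. Being algebraically closed, $\overline k^{\times}$ is divisible, so it splits as $\mu\oplus W$, where $\mu$ is the group of roots of unity and $W$ is a $\mathbb Q$-vector space. I claim $\dim_{\mathbb Q}W=\infty$: in the first case the rational primes $2,3,5,\dots$, and in the second the monic irreducible polynomials in $t$, are multiplicatively independent modulo $\mu$ by unique factorisation, so their images are $\mathbb Q$-linearly independent in $W$. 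Hence $\dim_{\mathbb Q}W\geqslant n$, and $A\hookrightarrow\mathbb Q^{n}\hookrightarrow W\leqslant\overline k^{\times}$ gives an injective $\lambda$; the associated $\varphi$ is faithful.

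For (ii) the field $k$ is locally finite of characteristic $p$, so $k$ is algebraic over $\mathbb F_p$ and $\overline k=\overline{\mathbb F_p}$. Here $\overline k^{\times}$ consists of all roots of unity of order prime to $p$, so $\overline k^{\times}\cong(\mathbb Q/\mathbb Z)_{p'}$, the $p'$-component of $\mathbb Q/\mathbb Z$. By hypothesis $A/H$ is a locally cyclic torsion $p'$-group; every such group embeds into $(\mathbb Q/\mathbb Z)_{p'}$, hence into $\overline k^{\times}$. Composing the projection $A\twoheadrightarrow A/H$ with such an embedding yields $\lambda\colon A\to\overline k^{\times}$ with $Ker\lambda=H$, and by the first paragraph the irreducible representation $\varphi$ afforded by $L=k[\lambda(A)]$ satisfies $Ker\varphi=H$.

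The routine points are the verification that $k[\lambda(A)]$ is a field and the elementary structure of $\overline{\mathbb F_p}^{\times}$. I expect the only real content to be the computation of $\dim_{\mathbb Q}W$ in part (i): this is exactly the step where the hypothesis that $k$ is not locally finite is used, since for a locally finite $k$ the group $\overline k^{\times}$ is torsion and no nontrivial torsion-free $A$ can be embedded into it — which is precisely why part (ii) can only achieve kernel $H$ rather than faithfulness.
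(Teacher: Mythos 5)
Your proposal is correct and follows essentially the same route as the paper: both reduce the problem to constructing a group homomorphism $\lambda\colon A\to\hat k^{\times}$ (into the multiplicative group of the algebraic closure) with prescribed kernel, via the observation that the image of the induced ring homomorphism $kA\to\hat k$ is a subfield and hence a simple $kA$-module whose representation has kernel $Ker\,\lambda$. The only difference is that where the paper cites Theorem 127.3 of Fuchs for the structure of $\hat k^{\times}$, you prove the needed facts directly --- multiplicative independence of the rational primes (resp.\ the irreducible polynomials in $t$) modulo roots of unity in part (i), and the identification of $\hat k^{\times}$ with the $p'$-part of $\mathbb Q/\mathbb Z$ in part (ii).
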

\begin{proof}
Let $k^*$ be the multiplicative group of the field $k$, let $\hat k$  be the algebraic closure of the field $k$ and let $\hat k^*$ be the multiplicative group  of the field $\hat k$. At first, we note that any group homomorphism $\varphi :A \to \hat k^*$ may be continued to a ring homomorphism $\varphi :kA \to \hat k$ given by $\varphi :\sum\nolimits_{a \in A} {k_a a}  \mapsto \sum\nolimits_{a \in A} {k_a \varphi (a)} $, where $k_a  \in k$. Besides, $\varphi (kA)$ is a subfield of $\hat k$ because $k \leqslant \varphi (kA)$ and for any element $a \in \hat k$ the subring of $\hat k$ generated by $k$ and $a$ is a subfield of $\hat k$ (see \cite{6}, Chap. VI, Proposition 3). Therefore, $\varphi (kA)$ is an irreducible $kA$-module, where action of $\alpha  \in kA$ on $\varphi (kA)$ is defined by multiplication of elements of $\varphi (kA)$ by $\varphi (\alpha )$, and $\varphi $ is an irreducible representation of the group $A$ over the field $k$.

     (i) Since the field $k$ is not locally finite, either $\mathbb{Q} \leqslant k$  or $f(t) \leqslant k$, where $f(t)$ is the field of fractions of the group algebra of infinite cyclic group $ \left\langle t \right\rangle $ over a finite field $f$. It is well known that the multiplicative groups $\mathbb{Q}^*$ and $f(t)^* $ are not torsion. Then it follows from theorem 127.3 of \cite{3}  that $\hat k^*$ has a torsion-free divisible subgroup of infinite rank. Therefore, it is not difficult to note that there is a group monomorphism $\varphi :A \to \hat k^*$ and $\varphi $ is a faithful irreducible representation of the group $A$ over the field $k$.

	(ii) It follows from theorem 127.3 of \cite{3} that $\hat k^*$ is a direct product of quasicyclic $q$-groups, where $q$ runs through the set of all prime numbers except $p$. Then it is not difficult to note that there is a group homomorphism $\varphi :A \to \hat k^*$ such that $Ker\varphi  = H$. Therefore, $\varphi $ is an irreducible representation of the group $A$ over the field $k$ such that $Ker\varphi  = H$.
\end{proof}
     \begin{theorem} Let $G$ be a soluble group of finite rank  and let $k$ be a  field. If the group $ G$ has a faithful irreducible representation over the field $k$ then  $Soc(G)$ is a locally cyclic $\mathbb{Z}G$-module, where the group $G$ acts on $Soc(G)$ by conjugations, and $chark \notin \pi (Soc(G))$.
\end{theorem}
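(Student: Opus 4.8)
The plan is to reduce the statement to Proposition 4.2, whose hypothesis is that all minimal abelian normal subgroups of $G$ are finite and whose conclusion is precisely the desired assertion about $abSoc(G)$. Since $G$ is soluble we have $Soc(G) = abSoc(G)$, so once Proposition 4.2 is applicable the theorem follows at once. Hence the entire proof rests on one structural fact, which I would isolate and prove first: every minimal normal subgroup of a soluble group of finite rank is finite.

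To establish this, let $N$ be a minimal normal subgroup of $G$. As $N$ is soluble and nontrivial, $[N,N]$ is a proper characteristic, hence $G$-invariant, subgroup, so minimality forces $N$ to be abelian. Let $T$ be its torsion subgroup; being characteristic it is $G$-invariant. If $T \ne 1$ then $N = T$ by minimality, and applying minimality successively to the primary components and to $N[p] = \{x \in N : px = 0\}$ shows that $N$ is an elementary abelian $p$-group; being of finite rank it is finite-dimensional over the field of $p$ elements, hence finite.

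The substantive case is $T = 1$, i.e. $N$ torsion-free. Here $pN$ is characteristic and nonzero for every prime $p$, so minimality forces $pN = N$ and $N$ is torsion-free divisible of finite rank, that is $N \cong \mathbb{Q}^n$. In particular $N$ contains a copy of $\mathbb{Q}$ and is therefore not minimax (its spectrum is infinite, whereas minimax groups have finite spectrum). Now I would invoke Lemma 3.1: since $N$ is a nontrivial torsion-free normal subgroup of the soluble finite-rank group $G$, it contains a torsion-free minimax abelian $G$-just-infinite subgroup $D$. By definition $D$ is a nontrivial normal, hence $G$-invariant, subgroup, and being minimax it is a proper subgroup of the non-minimax group $N$. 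This exhibits a proper nontrivial $G$-invariant subgroup of $N$, contradicting minimality. Thus the torsion-free case cannot occur, and all minimal normal subgroups of $G$ are finite.

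With this in hand the proof finishes quickly. For soluble $G$ every minimal abelian normal subgroup is a minimal normal subgroup (any nontrivial normal subgroup of an abelian normal subgroup is itself abelian and normal), hence finite; so $G$ satisfies the hypothesis of Proposition 4.2. Applying that proposition to the given faithful irreducible representation yields $chark \notin \pi(abSoc(G))$ and that $abSoc(G)$ is a locally cyclic $\mathbb{Z}G$-module, and replacing $abSoc(G)$ by the equal subgroup $Soc(G)$ gives the theorem. I expect the only genuine obstacle to be the torsion-free case of the structural lemma, where the divisibility forced by minimality must be played off against the minimax property of the just-infinite subgroup supplied by Lemma 3.1; the torsion case and the final appeal to Proposition 4.2 are routine.
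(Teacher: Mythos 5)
Your proof is correct and is essentially the paper's own argument: the paper's entire proof of this theorem is the observation that all minimal abelian normal subgroups of a soluble group of finite rank are finite, followed by an appeal to Proposition 4.3 (note that the proposition you describe is Proposition 4.3, not 4.2 --- Proposition 4.2 is the sufficiency construction from representations of the factors $H_i$). The only difference is that you actually prove the finiteness claim --- correctly, with the torsion case giving a finite elementary abelian $p$-group and the torsion-free case ($N \cong \mathbb{Q}^n$, hence not minimax) ruled out by the proper $G$-invariant minimax just-infinite subgroup supplied by Lemma 3.1 --- whereas the paper asserts this fact without justification, so your write-up fills a real gap in the paper's one-line proof.
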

   \begin{proof}
   Since the group $G$ has finite rank, all minimal abelian normal subgroups of $G$ are finite and the assertion follows from Proposition 4.3.
\end{proof}

	\begin{theorem} Let $G$ be a soluble group of finite rank and let $k$ be a not locally finite field. The group $G$ has a faithful irreducible representation over the field $k$ if and only if $ Soc(G)$ is a locally cyclic $\mathbb{Z}G$-module, where the group $G$ acts on $Soc(G)$ by conjugations, and $chark \notin \pi (Soc(G))$.
\end{theorem}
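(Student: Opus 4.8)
The plan is to prove the two implications separately, and to notice that the nontrivial work lies entirely in one of them. The necessity of the stated condition is already available: if $G$ has a faithful irreducible representation over $k$, then, since $G$ has finite rank, all its minimal abelian normal subgroups are finite, so this implication is exactly Theorem 5.1 (equivalently, a direct application of Proposition 4.3). Thus I would simply invoke Theorem 5.1 for the ``only if'' direction and devote the rest of the argument to sufficiency.

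For sufficiency I would realise $G$ through the essential normal subgroup $H = jiSoc(G) \times Soc(G)$ and apply Proposition 4.2 to the two normal factors $H_1 = jiSoc(G)$ and $H_2 = Soc(G)$ (discarding whichever factor is trivial, and using $n=1$ in the degenerate cases $jiSoc(G)=1$ or $Soc(G)=1$). The crucial hitting hypothesis of Proposition 4.2, namely that every nontrivial normal subgroup of $G$ meets one of the factors $H_i$, is precisely the conclusion of Proposition 3.2. Hence the whole problem reduces to equipping each of the two factors with an irreducible representation over $k$ whose kernel contains no nontrivial $G$-invariant subgroup.

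The torsion-free factor is handled at once: by Lemma 3.2, $jiSoc(G)$ is a torsion-free abelian group of finite rank, and since $k$ is not locally finite, Lemma 5.1(i) yields a \emph{faithful} irreducible representation of $jiSoc(G)$ over $k$, whose (trivial) kernel certainly contains no nontrivial $G$-invariant subgroup. For the torsion factor $Soc(G)$, I would first use the equivalence of conditions (i) and (ii) in Proposition 4.3 to convert the hypothesis that $Soc(G)=abSoc(G)$ is a locally cyclic $\mathbb{Z}G$-module into the existence of a subgroup $H_0 \leqslant Soc(G)$ with $Soc(G)/H_0$ locally cyclic and $H_0$ containing no nontrivial $G$-invariant subgroup. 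Writing $q = chark$, the assumption $q \notin \pi(Soc(G))$ forces the locally cyclic quotient $Soc(G)/H_0$ to be a torsion group with no $q$-torsion, so it embeds into the torsion subgroup of $\hat k^*$, which is the direct sum of the quasicyclic $\ell$-groups over the primes $\ell \neq q$ (theorem 127.3 of \cite{3}). Composing $Soc(G) \to Soc(G)/H_0 \hookrightarrow \hat k^*$ with the universal construction from the opening paragraph of the proof of Lemma 5.1 then gives an irreducible representation of $Soc(G)$ over $k$ with kernel exactly $H_0$. Feeding these two representations into Proposition 4.2 delivers the faithful irreducible representation of $G$.

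The main obstacle, though a mild one, is the torsion analogue of Lemma 5.1: that lemma explicitly supplies representations only for torsion-free $A$, so I must separately verify that a locally cyclic $q'$-torsion group embeds into $\hat k^*$ and that the extension of such a character to $k\,Soc(G)$ realises an irreducible representation with the prescribed kernel $H_0$. This is exactly where the hypothesis $chark \notin \pi(Soc(G))$ is consumed. Everything else is bookkeeping: the essentiality and the ``either/or'' intersection property from Proposition 3.2, the reduction of $jiSoc(G)$ to a torsion-free abelian group of finite rank from Lemma 3.2, and the tensor-product/induction machinery already packaged in Propositions 4.1 and 4.2.
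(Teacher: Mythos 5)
Your proposal is correct and follows essentially the same route as the paper: necessity via Theorem 5.1, and sufficiency by feeding an irreducible representation of $jiSoc(G)$ (Lemma 5.1(i)) and one of $Soc(G)$ with kernel $H_0$ into Proposition 4.2, with the hitting hypothesis supplied by Proposition 3.2 and the subgroup $H_0$ supplied by the equivalence in Proposition 4.3. In fact your handling of the torsion factor is slightly more careful than the paper's own, which simply cites Lemma 5.1(ii) for $Soc(G)$ even though that lemma is stated only for torsion-free $A$ over a locally finite field; the torsion analogue you sketch (embedding the locally cyclic $q'$-quotient $Soc(G)/H_0$ into $\hat k^*$ via theorem 127.3 of Fuchs and extending the character to a ring homomorphism on $k\,Soc(G)$) is precisely what is needed to make that step rigorous.
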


	\begin{proof}
If the group $G$ has a faithful irreducible representation over the field $k$ then, by theorem 5.1, $Soc(G)$ is a locally cyclic $\mathbb{Z}G$-module, where the group $G$ acts on $abSoc(G)$ by conjugations, and $chark \notin \pi (Soc(G))$.

	Suppose now that $Soc(G)$ meets the conditions of theorem. Then it follows from proposition 4.3 that $Soc(G)$ has a subgroup $H$ such that $Soc(G)/H$ is a locally cyclic $p'$-group, where $chark = p$, and $H$ contains no nontrivial $G$- invariant subgroups. By lemma 5.1(i), $jiSoc(G)$ has a faithful irreducible representation over $k$ and, by lemma 5.1(ii), $Soc(G)$ has an irreducible representation $\varphi $ over $k$ such that $Ker\varphi  = H$. By Proposition 3.2, for any nontrivial normal subgroup $N$ of $ G$ either $jiSoc(G) \cap N \ne 1$ or $Soc(G) \cap N \ne 1$. Then it follows from proposition 4.2 that $G$ has a faithful  irreducible representation over $k$.
\end{proof}

     We also obtained a criterion of existence of faithful irreducible representations of soluble groups of finite rank over a locally finite field under some additional conditions. .

     \begin{theorem} Let $G$ be a soluble group of finite rank and let $k$ be a locally finite field. Suppose that $Sp(B) \not\subset \{ chark\} $ for any nontrivial  abelian torsion-free normal subgroup $B$ of $G$.  Then the group $G$ has a faithful irreducible representation over the field $k$   if and only if  $Soc(G)$ is a locally cyclic $\mathbb{Z}G$-module, where the group $G$ acts on $Soc(G)$ by conjugations, and  $chark \notin \pi (Soc(G))$.
\end{theorem}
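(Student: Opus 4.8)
The plan is to establish the two implications separately; the necessity is immediate while the sufficiency carries all the weight. For the necessity, observe that theorem 5.1 holds over an arbitrary field, in particular over a locally finite one, so if $G$ has a faithful irreducible representation over $k$ then $Soc(G)$ is at once a locally cyclic $\mathbb{Z}G$-module and $chark \notin \pi(Soc(G))$. Thus no argument beyond theorem 5.1 is required in this direction, and the spectral hypothesis $Sp(B) \not\subset \{chark\}$ plays no role here.

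For the sufficiency I would follow the scheme of the proof of theorem 5.2, the one genuine change being the treatment of the torsion-free part, since lemma 5.1(i) is unavailable over a locally finite field. Write $p = chark$. Since $G$ is soluble of finite rank its minimal abelian normal subgroups are finite, so $Soc(G)$ is a torsion group; by hypothesis it is then a locally cyclic $\mathbb{Z}G$-module and, as $p \notin \pi(Soc(G))$, a $p'$-group. By the equivalence of conditions (i) and (ii) in proposition 4.3 there is a subgroup $H \leqslant Soc(G)$ with $Soc(G)/H$ locally cyclic (hence a locally cyclic $p'$-group) and $H$ containing no nontrivial $G$-invariant subgroup; lemma 5.1(ii) then yields an irreducible representation $\varphi$ of $Soc(G)$ over $k$ with $Ker\varphi = H$.

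The decisive step, and the point at which the spectral hypothesis enters, is the torsion-free part. Assume $jiSoc(G) \ne 1$. The assumption $Sp(B) \not\subset \{p\}$ for every nontrivial torsion-free abelian normal subgroup $B$ is exactly the input needed to apply proposition 3.1, which provides a dense subgroup $K \leqslant jiSoc(G)$ such that $jiSoc(G)/K$ is a locally cyclic $p'$-group and $K$ contains no nontrivial $G$-invariant subgroup. Lemma 5.1(ii) again produces an irreducible representation $\psi$ of $jiSoc(G)$ over $k$ with $Ker\psi = K$. I expect this to be the main obstacle: over a locally finite field $\psi$ is very far from faithful on $jiSoc(G)$, so the whole construction rests on the kernel $K$ being free of nontrivial $G$-invariant subgroups, which is precisely what the spectral condition secures and what the gluing step below exploits.

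Finally I would glue the two pieces together. Because $jiSoc(G)$ is torsion-free and $Soc(G)$ is torsion, the product $jiSoc(G) \times Soc(G)$ is direct, and both factors are normal in $G$. Proposition 3.2 ensures that every nontrivial normal subgroup $N \leqslant G$ meets at least one of $jiSoc(G)$ and $Soc(G)$ nontrivially, which is exactly the intersection hypothesis of proposition 4.2 for the two normal factors. Applying proposition 4.2 to these factors, equipped with the representations $\psi$ and $\varphi$ whose kernels contain no nontrivial $G$-invariant subgroup, delivers a faithful irreducible representation of $G$ over $k$. The degenerate cases are handled in passing: if $G = 1$ the statement is trivial, while if exactly one of $jiSoc(G)$, $Soc(G)$ is trivial one simply applies proposition 4.2 with the single remaining factor.
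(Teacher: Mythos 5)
Your proposal is correct and follows essentially the same route as the paper: necessity via theorem 5.1, and sufficiency by producing the subgroups $H_1 \leqslant Soc(G)$ and $H_2 \leqslant jiSoc(G)$ from proposition 4.3 and proposition 3.1 respectively, realizing them as kernels of irreducible representations via lemma 5.1(ii), and gluing with propositions 3.2 and 4.2. Your added remarks (that $Soc(G)$ is a torsion $p'$-group, and the explicit treatment of the degenerate cases where one factor is trivial) are harmless elaborations of what the paper leaves implicit.
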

\begin{proof}
 If the group $G$ has a faithful irreducible representation over the field $k$ then, by theorem 5.1, $ Soc(G)$ is a locally cyclic $\mathbb{Z}G$ -module, where the group $G$ acts on $Soc(G)$ by conjugations, and $char k \notin \pi (Soc(G))$.

	Suppose now that $Soc(G)$ meets the conditions of the theorem. Then it follows from proposition 4.3 that $Soc(G)$ has a subgroup $H_1 $ such that $Soc(G)/H_1 $ is a locally cyclic $p'$-group, where $chark = p$, and $H_1 $ contains no nontrivial $G$- invariant subgroups. By proposition 3.1, $jiSoc(G)$ has a subgroup $H_2 $ such that $jiSoc(G)/H_2 $ is a locally cyclic $p'$-group, where  $chark = p$, and $H_2 $ contains no nontrivial $G$- invariant subgroups. By lemma 5.1, $Soc(G)$ has a an irreducible representation $\varphi _1 $ over $k$ such that $Ker\varphi_1  = H_1 $ and $jiSoc(G)$ has an irreducible representation $\varphi _2 $ over $k$ such that $Ker\varphi_2  = H_2 $. By Proposition 3.2, for any nontrivial normal subgroup $N$ of $G$ either $jiSoc(G) \cap N \ne 1$ or $Soc(G) \cap N \ne 1$. Then it follows from proposition 4.2 that $G$ has a faithful  irreducible representation over $k$.
\end{proof}

\end{document}